\documentclass[11pt,a4paper]{article}
\usepackage{authblk}
\usepackage{bm}
\usepackage{geometry}
\usepackage{fancyhdr}
\usepackage{amsmath,amsthm,amssymb,amsfonts,mathrsfs,amscd}

\usepackage{enumerate}

\newtheorem{thm}{Theorem}[section]
\newtheorem{prop}[thm]{Proposition}
\newtheorem{lem}[thm]{Lemma}
\newtheorem{cor}[thm]{Corollary}
\usepackage{cite}

\theoremstyle{definition}
\newtheorem*{abs}{Abstract}
\newtheorem{definition}[thm]{Definition}
\newtheorem{example}[thm]{Example}

\newtheorem{remark}[thm]{Remark}

\numberwithin{equation}{section}

\begin{document}
	
\title{Power Stability of Finitely Generated Dynamical Frames for Positive Operators}
	\author{Jian Wu\thanks{Corresponding author:
xingxingwu2022@163.com}}

	\affil{College of Mathematics, Southwestern University of Finance and Economics, Chengdu 611130, P.R. China}

	\renewcommand*{\Affilfont}{\small\it}
	\renewcommand\Authands{ and }
	\date{}

	\maketitle

	\begin{abs}
Let
\(
\Lambda=\{\lambda_j\}_{j\in\mathbb N}\subset[0,1)
\)
be a finite union of Carleson sequences, and let
\(
\Lambda^p=\{\lambda_j^p\}_{j\in\mathbb N}
\)
for \(p>0\). We prove that \(\Lambda\) and \(\Lambda^p\) admit
compatible Hartmann-type cluster decompositions and that, under the
natural coordinate identification,
\(
H^2(\Lambda)=H^2(\Lambda^p).
\)
After taking account of the change in the normalization weights, this
trace-space identity gives
\(
\operatorname{Ran}(T_\Lambda)
=
\operatorname{Ran}(T_{\Lambda^p})
\)
for the associated weighted evaluation operators. The characterization of finitely generated dynamical frames for normal
operators then implies that, for every positive operator
\(A\in\mathcal B(H)\), every finite index set \(I\), and every \(p>0\),
\(
\{A^nf_i\}_{n\in\mathbb N,\,i\in I}
\text{ is a frame for }H\) if and only if
\(
\{A^{pn}f_i\}_{n\in\mathbb N,\,i\in I}
\text{ is a frame for }H.
\)
In particular, for every \(q\in\mathbb N^+\), the subfamily
\(
\{A^{qn}f_i\}_{n\in\mathbb N,\,i\in I}
\)
is still a frame, and hence every such dynamical frame has infinite
excess.
An example shows that the positivity assumption cannot in general be
replaced by normality.

		\vskip0.05in
		
		\noindent {\bf Keywords.}
		{Dynamical frames; Carleson sequences; Hardy-space traces; weighted evaluation operators; power stability. }
		\vskip0.05in
		
		\noindent {\bf Mathematics Subject Classification (2020).} {\rm 42C15, 30H10, 30E05, 47B15}
	\end{abs}
	
\section{Introduction}
Dynamical sampling studies the recovery of a signal from measurements
taken at different spatial locations and times while the signal evolves
under an operator. If \(E\) is a bounded evolution operator on a Hilbert
space \(H\), \(x\in H\) is an initial state, and
\(\{f_i\}_{i\in I}\subset H\) is a family of sampling vectors, then
\[
\langle E^n x,f_i\rangle
=
\langle x,(E^*)^nf_i\rangle,
\qquad n\in\mathbb N,
\quad i\in I.
\]
Thus, stable recovery is equivalent to the frame property of an
operator-orbit system. This point of view was developed in
\cite{ADKTimeSpace,ADKExact,AAldroubiDynamicalsampling}; see also
\cite{DynamicalSamplingSurvey} and the broader sampling literature
\cite{AAldroubiKGrochenig,KGrochenig,AAldroubiABaskakov,BAdcock,FBass}.
We fix some notation used throughout the paper. Let
\(
\mathbb N:=\{0,1,2,\ldots\},
\
\mathbb D:=\{z\in\mathbb C:|z|<1\}.
\)
The Hardy space \(H^2(\mathbb D)\) consists of all analytic functions
\(
f(z)=\sum_{n=0}^{\infty}c_nz^n
\)
such that
\(
\|f\|_{H^2(\mathbb D)}^2
:=
\sum_{n=0}^{\infty}|c_n|^2
<\infty.
\)
We also write
\(
H^\infty(\mathbb D)
:=
\left\{
f\in\operatorname{Hol}(\mathbb D):
\|f\|_\infty
:=
\sup_{z\in\mathbb D}|f(z)|
<\infty
\right\}.
\)

If
\(
\Gamma=\{\gamma_j\}_{j\in\mathbb N}\subset\mathbb D
\)
is an indexed sequence, we denote the corresponding trace sequence
spaces by
\(
H^2(\Gamma)
:=
\left\{
\{f(\gamma_j)\}_{j\in\mathbb N}:
f\in H^2(\mathbb D)
\right\}
\)
and
\(
H^\infty(\Gamma)
:=
\left\{
\{f(\gamma_j)\}_{j\in\mathbb N}:
f\in H^\infty(\mathbb D)
\right\}.
\)
Whenever two indexed sequences are compared, equality of their trace
spaces is understood under the coordinate identification induced by
their enumerations.
The theory of frames generated by operator iterations has subsequently
been developed for several classes of operators. Aldroubi, Cabrelli,
Molter, and Tang obtained a fundamental characterization for diagonal
self-adjoint operators and showed the decisive role of Carleson
interpolation \cite{AAldroubiDynamicalsampling}. Aldroubi, Cabrelli, \c{C}akmak, Molter, and
Petrosyan studied iterative actions of general normal operators and
analyzed completeness, Bessel sequences, bases, and frames through
spectral theory with multiplicity \cite{NormalOperators}. Further
results on Bessel orbits, operator representations, and the structural
properties of frames generated by operator orbits can be found in
\cite{DynamicalSamplingSystems,BesselOrbits,OperatorRepresentations,
BoundedRepresentations,IteratedSystems,OperatorOrbits}.
Of particular relevance to the present paper is the finite-generator
system
\(
\{A^nf_i\}_{n\in\mathbb N,\,i\in I},
\ |I|<\infty.
\)
Cabrelli, Molter, Paternostro, and Philipp characterized when this
system is a frame for a normal operator \(A\) \cite{CCabrelliUMolter}. Their characterization combines spectral multiplicity, the requirement
that the spectral set be a finite union of Carleson sequences, uniform
estimates on the spectral subspaces, and a range condition for a
weighted evaluation operator on \(H^2(\mathbb D)\). Related descriptions using the pseudo-hyperbolic
geometry of \(\mathbb D\), interpolating Blaschke products, and model
spaces were obtained in
\cite{MultiOrbitalFrames,VectorValuedModelSpaces}. The underlying
function theory goes back to
\cite{Carleson1958,ShapiroShields,Duren1970,Hoffman1988,AHartmann,Hartmann,DurenSchuster}.

Continuous powers of normal operators were studied in
\cite{ContinuousPowers}. For single-generator Carleson frames,
Christensen et al.~\cite{OChristensenMHasannasab} established stability
under integer subsampling. Krishtal and Miller
\cite{KrishtalMiller} considered systems of the form
\(
\{T^{Nk+j_k}\varphi\}_{k\in\mathbb N},
\
0\leq j_k<N,
\)
where the offsets \(j_k\) may be nonintegral. Their result allows
fractional exponents but concerns a single generator and retains an
integer coarse step \(N\). Here we consider finitely many generators
and the uniform power transformation \(A\mapsto A^p\) for arbitrary
\(p>0\).

The passage from one generator to finitely many is not a routine
generator-by-generator extension. In fact, for a finitely generated
dynamical frame, the individual orbits need not separately satisfy the
frame condition. Moreover, the spectral set occurring in
\cite[Theorem~2.3]{CCabrelliUMolter} is only required to be a finite
union of Carleson sequences, and the characterization contains a range
condition that couples the generators through the spectral subspaces.
Consequently, the known single-orbit results do not directly address
the multi-generator problem considered here.

Our main analytic result is the invariance of Hardy-space traces under
positive power transformations. More precisely, if
\(
\Lambda=\{\lambda_j\}_{j\in\mathbb N}\subset[0,1)
\)
is a finite union of Carleson sequences and
\(
\Lambda^p=\{\lambda_j^p\}_{j\in\mathbb N},
\ p>0,
\)
then, under the natural coordinate identification,
\(
H^2(\Lambda)=H^2(\Lambda^p).
\)
The proof combines a common Hartmann cluster decomposition for
\(\Lambda\) and \(\Lambda^p\) with uniform comparisons between ordinary
Newton divided differences and their pseudo-hyperbolic counterparts.
The resulting trace-space identity is stronger than the fact that the
power map preserves finite unions of Carleson sequences and is
independent of the subsequent frame application.

The power transformation also changes the normalization weights from
\((1-\lambda_j^2)^{1/2}\) to
\((1-\lambda_j^{2p})^{1/2}\). We prove that the ratio of these weights
and its reciprocal belong to \(H^\infty(\Lambda)\). Together with the
trace-space identity, this yields
\(
\operatorname{Ran}(T_\Lambda)
=
\operatorname{Ran}(T_{\Lambda^p}),
\)
as well as the corresponding equality for finite direct sums. This
range identity is the key additional ingredient required by the
coupled condition in the finite multi-orbit characterization.

Applying the characterization of Cabrelli, Molter, Paternostro, and
Philipp, we obtain, for every positive operator \(A\), every finite
index set \(I\), and every \(p>0\),
\[
\{A^nf_i\}_{n\in\mathbb N,\,i\in I}
\text{ is a frame for }H
\quad\Longleftrightarrow\quad
\{A^{pn}f_i\}_{n\in\mathbb N,\,i\in I}
\text{ is a frame for }H.
\]
This gives integer-time subsampling and infinite excess for every such
finitely generated dynamical frame. A final example shows that positivity
cannot in general be replaced by normality.

We review the concept of frames. A sequence
\(\{f_\gamma\}_{\gamma\in G}\) in a Hilbert space \(H\) is a frame for
\(H\) if there exist constants \(0<a\leq b<\infty\) such that
\(
a\|x\|^2
\leq
\sum_{\gamma\in G}
|\langle x,f_\gamma\rangle|^2
\leq
b\|x\|^2,
\ x\in H.
\)
If only the upper inequality holds, the sequence is called a Bessel
sequence.
Throughout, \(H\) denotes an infinite-dimensional separable Hilbert
space, and we set
\(
\mathbb N^+:=\mathbb N\setminus\{0\}.
\)
The unit circle is denoted by
\(
\mathbb T:=\{z\in\mathbb C:|z|=1\}.
\)

   For \(\lambda\in\mathbb D\), define the Blaschke factor
\[
b_\lambda(z)
=
\begin{cases}
\dfrac{|\lambda|}{\lambda}
\dfrac{\lambda-z}{1-\overline{\lambda}z},
& \lambda\ne0,\\[2ex]
z,
& \lambda=0.
\end{cases}
\]
Thus
\(
|b_\lambda(z)|
=
\rho(\lambda,z)
:=
\left|
\frac{\lambda-z}{1-\overline{\lambda}z}
\right|,
\)
where \(\rho\) denotes the pseudo-hyperbolic distance.

Let
\(
\Lambda=\{\lambda_j\}_{j\in\mathbb N}\subset\mathbb D
\)
be a sequence of distinct points. All sequences considered below are
infinite unless explicitly stated otherwise. We say that \(\Lambda\)
satisfies the Carleson condition if
\(
\delta_\Lambda
:=
\inf_{j\in\mathbb N}
\prod_{\substack{i\in\mathbb N\\ i\ne j}}
|b_{\lambda_i}(\lambda_j)|
>0.
\)
In this case, we write \(\Lambda\in\mathcal C\).

The remainder of the paper is organized as follows. Section~2 studies
Carleson sequences under power transformations and constructs a common
Hartmann-type cluster decomposition. Section~3 uses Newton interpolation
to establish the invariance of the associated \(H^2\)-trace spaces.
Section~4 proves the corresponding range identity for weighted
evaluation operators and applies it to power stability and redundancy
properties of finitely generated dynamical frames. A counterexample
shows that positivity cannot in general be omitted.

\section{Power Perturbations and Common Cluster Decompositions}
The main result of this section is that, for every \(p>0\), a finite
union of Carleson sequences
\(\Lambda\subset(0,1)\) and its power transform \(\Lambda^p\)
admit compatible Hartmann-type cluster decompositions. As a preliminary
step, we prove that the Carleson condition is preserved under arbitrary
positive powers.

The following decomposition result is
Proposition~1.1 of Hartmann~\cite{Hartmann}.

\begin{lem}
\label{cc}
Let
\(
\Lambda=\bigcup_{r=1}^{N}\Lambda_r\) for \(\Lambda_r\in\mathcal C\) and \(1\le r\le N\).
Then, for every \(0<\eta<1\), there exists a partition
\(
\Lambda
=
\mathop{\dot\bigcup}_{n\in\mathbb N^+}\sigma_n
\)
with the following properties:
\begin{enumerate}[\rm (i)]
\item
Each \(\sigma_n\) is finite and
\(
\sup_{n\in\mathbb N^+}|\sigma_n|\le N,
\)
where \(|E|\) denotes the cardinality of a finite set \(E\).

\item
For every \(n\in\mathbb N^+\) and all distinct
\(\lambda,\mu\in\sigma_n\),
\(
\rho(\lambda,\mu)
=
|b_\lambda(\mu)|
<
\eta.
\)

\item
There exists a constant \(\delta>0\) such that, for every
choice of points
\(
\lambda_n^0\in\sigma_n
\ (n\in\mathbb N^+),
\)
the selector
\(
\Lambda_0
=
\{\lambda_n^0:n\in\mathbb N^+\}
\)
belongs to \(\mathcal C\) and satisfies
\(
\delta_{\Lambda_0}\ge\delta.
\)

\item
There exist a constant \(M_D<\infty\) and functions
\(
D_n\in H^\infty(\mathbb D),
\ n\in\mathbb N^+,
\)
such that
\[
D_n(\lambda)
=
\begin{cases}
1, & \lambda\in\sigma_n,\\
0, & \lambda\in\Lambda\setminus\sigma_n,
\end{cases}
\]
and
\(
\sum_{n\in\mathbb N^+}|D_n(z)|
\le M_D\) for \( z\in\mathbb D\).
\end{enumerate}
\end{lem}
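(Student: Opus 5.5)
We plan to prove Lemma~\ref{cc} by grouping points into pseudo-hyperbolic clusters and then invoking Carleson's interpolation theorem and its finite-union refinements. The key fact is that a finite union of $N$ Carleson sequences is exactly a sequence whose associated measure $\sum_j(1-|\lambda_j|^2)\delta_{\lambda_j}$ is a Carleson measure. Moreover, each $\Lambda_r$ is uniformly separated, so any pseudo-hyperbolic disc of radius $\eta'$ (for suitable $\eta'$ depending on $\eta$ and $\min_r\delta_{\Lambda_r}$) contains at most one point of each $\Lambda_r$. Hence it contains at most $N$ points of $\Lambda$.

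\textbf{Step 1: constructing the clusters.} We define an equivalence relation on $\Lambda$ by chaining: $\lambda\sim\mu$ if there is a chain $\lambda=\nu_0,\dots,\nu_k=\mu$ in $\Lambda$ with $\rho(\nu_{i-1},\nu_i)<\varepsilon$. Here $\varepsilon>0$ is chosen small relative to $\eta$. The difficulty is that chains might become long. We therefore run the standard trick: choose a sequence of thresholds $\varepsilon_1<\varepsilon_2<\dots<\varepsilon_{N+1}\le\eta/(2N)$ with large gaps between consecutive ones. Since any chain component has at most $N$ points in a bounded hyperbolic region, pigeonhole among $N$ scales yields a level $k$ at which no pair of points has distance in $[\varepsilon_k,\varepsilon_{k+1})$. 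At that scale, clusters of diameter $<N\varepsilon_k<\eta$ are separated from each other by at least $\varepsilon_{k+1}$. To make the pigeonhole uniform over all of $\Lambda$, rather than cluster by cluster, we argue locally. Then (i) and (ii) follow, since a set of diameter $<\eta$ contains at most one point from each $\Lambda_r$, assuming $\eta$ is less than the separation constants. For larger $\eta$, properties (i) and (ii) only become easier, so we may first shrink $\eta$.

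\textbf{Step 2: proving (iii).} A selector $\Lambda_0$ is uniformly separated by $\varepsilon_{k+1}$ (uniformly after the local choice). Its measure is dominated by that of $\Lambda$, so it is a Carleson measure with constant independent of the choice of selector. Carleson's theorem (separation together with a Carleson measure implies $\mathcal C$, with quantitative $\delta$) then gives a uniform $\delta$.

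\textbf{Step 3: proving (iv).} We use the fact that Carleson sequences admit linear interpolation operators with Peter Jones/Earl-type explicit solutions. For the separated family of clusters, we take for each $n$ a function that equals $1$ on $\sigma_n$ and $0$ on the other clusters. The construction proceeds by forming the Blaschke product $B_n$ of $\Lambda\setminus\sigma_n$ and dividing appropriately. Alternatively, we apply Jones' formula to the interpolating sequence $\Lambda_0$ and correct within clusters using the finite-dimensional divided-difference structure, since each cluster has at most $N$ points of bounded hyperbolic diameter. The summability condition $\sum_n|D_n(z)|\le M_D$ is the hard part. It requires using Jones' explicit kernels, or the Varopoulos/Beurling-type result that interpolating sequences admit functions $f_n$ with $f_n(\lambda_m)=\delta_{nm}$ and $\sum|f_n|$ bounded. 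We would then multiply by finite products of Blaschke factors to handle points within clusters.

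The main obstacle is step 3. Since this is exactly Hartmann's Proposition 1.1, I would ultimately defer to that reference for the bounded-sum construction.
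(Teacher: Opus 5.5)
The paper gives no proof of Lemma~\ref{cc}. It quotes the result as Proposition~1.1 of Hartmann, and your proposal likewise ends by deferring (iv), the only hard property, to that source, so on that point you and the paper agree. The trouble is in the self-contained parts you add.

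In Step~1 the pigeonhole assertion is false as stated. \(\Lambda\) has infinitely many pairs, and their distances may meet every window \([\varepsilon_k,\varepsilon_{k+1})\), so no global level need exist. ``Argue locally'' does not say how a partition built from different levels in different regions is well defined and separated.

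The multi-scale device is also unnecessary. After your shrinking of \(\eta\), assume \(\eta\le\min_r\delta_{\Lambda_r}\), fix the single scale \(\varepsilon:=\eta/(2N)\), and let the \(\sigma_n\) be the connected components of the graph on \(\Lambda\) joining points with \(\rho<\varepsilon\).
\begin{itemize}
\item Any two points of the disc \(\{z:\rho(z,\nu)<N\varepsilon\}\) are at distance \(<\eta\le\delta_{\Lambda_r}\), so the disc contains at most one point of each \(\Lambda_r\).
\item A breadth-first search from \(\nu\) in a component with more than \(N\) points would put \(N+1\) of them in this disc. Hence \(|\sigma_n|\le N\).
\item Distinct points of \(\sigma_n\) are joined by a path of at most \(N-1\) edges, so \(\rho<(N-1)\varepsilon<\eta\).
\item Points in different components satisfy \(\rho\ge\varepsilon\) by construction. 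This is exactly the separation your Step~2 needs, and with it Step~2 is correct.
\end{itemize}

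For (iv), neither heuristic works as written.
\begin{itemize}
\item The quotient \(B_{\Lambda\setminus\sigma_n}(z)/B_{\Lambda\setminus\sigma_n}(\lambda_{n,1})\) equals \(1\) at \(\lambda_{n,1}\), but in general not at the other points of \(\sigma_n\). Even for singleton clusters \(\sigma_n=\{\lambda_n\}\), one has \(|B_{\Lambda\setminus\{\lambda_n\}}(z)/B_{\Lambda\setminus\{\lambda_n\}}(\lambda_n)|\ge|B_\Lambda(z)|\) for every \(n\). So the sum of moduli diverges at every \(z\notin\Lambda\), which is why Jones-type decaying factors are indispensable.
\item Multiplying Beurling functions \(f_n\) of \(\Lambda_0\) by the Blaschke products of the remaining points of the other clusters preserves summability and produces the right zeros. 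But nothing in the defining properties of \(f_n\) controls its values on \(\sigma_n\setminus\{\lambda_{n,1}\}\); they may even vanish. So no within-cluster Newton correction with uniformly bounded coefficients is possible until you have functions bounded below uniformly on each \(\sigma_n\).
\end{itemize}
Building such functions from the lower bound \(\inf_n\inf_{\zeta\in\sigma_n}|B_{\Lambda\setminus\sigma_n}(\zeta)|\ge\delta^N\) is the real content of Hartmann's construction. That bound follows from (iii) and \(|\sigma_k|\le N\), exactly as in the proof of Theorem~\ref{thm:simultaneous-cluster-decomposition}(iv). Citing Hartmann, as the paper does, is legitimate, but you should present (iv) as a citation rather than as a proof sketch.
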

For a sequence
\(
\Lambda=\{\lambda_j\}_{j\in\mathbb N}\subset(0,1)
\)
and \(p>0\), set
\(
\Lambda^p:=\{\lambda_j^p:j\in\mathbb N\}.
\)

\begin{lem}
\label{aa}
Let \(\Lambda\subset(0,1)\) and assume that
\(
a:=\inf_{\lambda\in\Lambda}\lambda>0.
\)
Then, for every \(p>0\), there exist constants
\(
0<C_1\le C_2<\infty
\)
such that, for all distinct \(\lambda,\mu\in\Lambda\),
\(
C_1
\le
\frac{\rho(\lambda^p,\mu^p)}
{\rho(\lambda,\mu)}
\le
C_2.
\)
\end{lem}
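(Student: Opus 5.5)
For real points in (0,1), the pseudo-hyperbolic distance has the explicit form
\[
\rho(\lambda,\mu)=\frac{|\lambda-\mu|}{1-\lambda\mu}.
\]
The plan is to reduce the comparison to one-variable estimates. Since all points lie in \([a,1)\), for \(\mu>\lambda\) I would write
\[
\frac{\rho(\lambda^p,\mu^p)}{\rho(\lambda,\mu)}
=
\frac{\mu^p-\lambda^p}{\mu-\lambda}\cdot\frac{1-\lambda\mu}{1-\lambda^p\mu^p}.
\]
I would then bound each factor separately from above and below by positive constants depending only on \(a\) and \(p\). In fact this will be shown for all \(a\le\lambda<\mu<1\), not only for points of \(\Lambda\).

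The difference quotient is handled by the mean value theorem. We have \((\mu^p-\lambda^p)/(\mu-\lambda)=p\xi^{p-1}\) for some \(\xi\in(\lambda,\mu)\subset[a,1)\). Hence this factor lies between \(p\min(a^{p-1},1)\) and \(p\max(a^{p-1},1)\).

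The second factor needs uniformity as \(\lambda,\mu\to1\), where both numerator and denominator vanish; I expect this to be the main point. I would use the variables \(s=\lambda\mu\in[a^2,1)\), so the factor equals \((1-s)/(1-s^p)\). The function \(g(s)=(1-s^p)/(1-s)\) is continuous and positive on \([a^2,1)\). It extends continuously to \(s=1\) with value \(p\) (by L'Hôpital or the mean value theorem again, \(g(s)=p\eta^{p-1}\) with \(\eta\in(s,1)\)). Therefore \(g\) is bounded between \(p\min(a^{2(p-1)},1)\) and \(p\max(a^{2(p-1)},1)\) on this interval.

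Multiplying the two bounds gives
\[
C_1=\frac{\min(a^{p-1},1)}{\max(a^{2(p-1)},1)}
\quad\text{and}\quad
C_2=\frac{\max(a^{p-1},1)}{\min(a^{2(p-1)},1)},
\]
both positive and finite. Note that the hypothesis \(a>0\) is used essentially: without it, \(\xi^{p-1}\) is unbounded or degenerate near \(0\) when \(p\ne1\). The argument is symmetric in \(\lambda,\mu\), so the case \(\lambda>\mu\) follows by relabeling.
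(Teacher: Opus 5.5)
Your proof is correct and follows the paper's argument. Both use the same factorization, apply the mean value theorem to the difference quotient, and bound \((1-t^p)/(1-t)\) on both sides at \(t=\lambda\mu\). The only difference is that the paper uses the \(a\)-independent bound \(\min\{1,p\}\le(1-t^p)/(1-t)\le\max\{1,p\}\) on all of \((0,1)\), whereas your mean-value bound on \([a^2,1)\) yields different but equally valid constants.
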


\begin{proof}
Let \(\lambda,\mu\in\Lambda\) be distinct. Since
\(
\rho(\lambda,\mu)
=
\frac{|\lambda-\mu|}{1-\lambda\mu},
\)
we have
\(
\frac{\rho(\lambda^p,\mu^p)}
{\rho(\lambda,\mu)}
=
\frac{|\lambda^p-\mu^p|}
{|\lambda-\mu|}
\frac{1-\lambda\mu}
{1-\lambda^p\mu^p}.
\)
By the mean value theorem, there exists a point \(\xi\) between
\(\lambda\) and \(\mu\) such that
\(
\frac{|\lambda^p-\mu^p|}
{|\lambda-\mu|}
=
p\xi^{p-1}.
\)
Since \(a\le\xi<1\), it follows that
\(
p\min\{1,a^{p-1}\}
\le
\frac{|\lambda^p-\mu^p|}
{|\lambda-\mu|}
\le
p\max\{1,a^{p-1}\}.
\)
Moreover, for every \(0<t<1\),
\(
\min\{1,p\}
\le
\frac{1-t^p}{1-t}
\le
\max\{1,p\}.
\)
Applying this inequality to \(t=\lambda\mu\), we obtain
\(
\frac{1}{\max\{1,p\}}
\le
\frac{1-\lambda\mu}
{1-\lambda^p\mu^p}
\le
\frac{1}{\min\{1,p\}}.
\)
Combining the preceding estimates gives
\[
\frac{p\min\{1,a^{p-1}\}}
{\max\{1,p\}}
\le
\frac{\rho(\lambda^p,\mu^p)}
{\rho(\lambda,\mu)}
\le
\frac{p\max\{1,a^{p-1}\}}
{\min\{1,p\}}.
\]
Thus the assertion holds with
\(
C_1
=
\frac{p\min\{1,a^{p-1}\}}
{\max\{1,p\}},
\
C_2
=
\frac{p\max\{1,a^{p-1}\}}
{\min\{1,p\}}.
\)
\end{proof}

The integer-power case follows from
\cite[Theorem~2.1]{OChristensenMHasannasab}, while the real-power case
also follows indirectly from
\cite[Theorem~2.3 and Corollary~2.4]{KrishtalMiller}, together with
the standard characterization of Carleson frames
\cite[Theorem~3.16]{AAldroubiDynamicalsampling}.
The following proposition gives a direct proof and an explicit lower
bound for \(\delta_{\Lambda^p}\).
\begin{prop}
\label{bb}
Let \(\Lambda\in\mathcal C\) and assume that
\(\Lambda\subset(0,1)\). Then, for every \(p>0\),
\(
\Lambda^p\in\mathcal C.
\)
\end{prop}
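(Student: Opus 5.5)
We need to show \(\inf_j \prod_{i\ne j}\rho(\lambda_i^p,\lambda_j^p)>0\). We use the standard fact that for a Carleson sequence \(\Lambda\subset(0,1)\), the quantity \(\sum_{i\ne j}(1-\rho(\lambda_i,\lambda_j)^2)\) is bounded uniformly in \(j\). Indeed, \(-\log x\ge 1-x\) for \(x\in(0,1]\) gives
\[
\sum_{i\ne j}\bigl(1-\rho(\lambda_i,\lambda_j)^2\bigr)\le -2\log\delta_\Lambda .
\]
Conversely, a uniform bound on this sum together with uniform separation gives a uniform lower bound on the product. Here we use \(-\log x\le C_\epsilon(1-x)\) for \(x\in[\epsilon,1]\), applied with \(x=\rho^2\). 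So the plan is to transfer both separation and the sum bound from \(\Lambda\) to \(\Lambda^p\).

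The points near 0 must be handled first, since Lemma~\ref{aa} requires \(\inf\Lambda>0\). A Carleson sequence satisfies the Blaschke condition \(\sum(1-\lambda_j)<\infty\), so only finitely many \(\lambda_j\) lie in \((0,1/2]\). Split \(\Lambda=F\cup\Lambda'\), where \(F\) is finite and \(\Lambda'\subset(1/2,1)\). Then:
\begin{enumerate}[\rm (a)]
\item The map \(t\mapsto t^p\) is injective on \((0,1)\), so \(\Lambda^p\) consists of distinct points.
\item For \(j\) with \(\lambda_j\in\Lambda'\), the finitely many factors \(\rho(\lambda_i^p,\lambda_j^p)\) with \(\lambda_i\in F\) are bounded below uniformly in \(j\). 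The only risk is \(\lambda_j\to 1\), and then \(\rho(\lambda_i^p,\lambda_j^p)\to 1\) for each fixed \(\lambda_i\in F\).
\item For \(j\) with \(\lambda_j\in F\), there are finitely many \(j\) in total. For each such \(j\) the product is positive by the Blaschke condition for \(\Lambda^p\), since \(1-\lambda^p\asymp 1-\lambda\) as \(\lambda\to1\).
\end{enumerate}
So it suffices to bound \(\prod_{i\ne j,\ \lambda_i\in\Lambda'}\rho(\lambda_i^p,\lambda_j^p)\) from below, uniformly over \(\lambda_j\in\Lambda'\).

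On \(\Lambda'\), Lemma~\ref{aa} (with \(a=1/2\)) gives \(\rho(\lambda_i^p,\lambda_j^p)\ge C_1\rho(\lambda_i,\lambda_j)\ge C_1\delta_\Lambda\), so \(\Lambda'^p\) is uniformly separated. The real work is comparing \(1-\rho(\lambda_i^p,\lambda_j^p)^2\) with \(1-\rho(\lambda_i,\lambda_j)^2\). Lemma~\ref{aa} alone is not enough here, because multiplicative control of \(\rho\) does not control \(1-\rho\). Instead we use the identity
\[
1-\rho(x,y)^2=\frac{(1-x^2)(1-y^2)}{(1-xy)^2},\qquad x,y\in(0,1).
\]
For \(t\in[1/2,1)\) we have \(1-t^p\asymp 1-t\) and \(1-t^{2p}\asymp 1-t^2\), with constants depending only on \(p\); this is the inequality \(\min\{1,p\}\le(1-t^p)/(1-t)\le\max\{1,p\}\) already used in Lemma~\ref{aa}. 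Applying it with \(t=\lambda_i^2,\lambda_j^2,\lambda_i\lambda_j\) gives
\[
1-\rho(\lambda_i^p,\lambda_j^p)^2\le K_p\bigl(1-\rho(\lambda_i,\lambda_j)^2\bigr),
\]
with \(K_p\) depending only on \(p\). Summing over \(i\) yields \(\sum_{i\ne j}(1-\rho(\lambda_i^p,\lambda_j^p)^2)\le -2K_p\log\delta_\Lambda\). Combined with the separation constant \(\epsilon=C_1\delta_\Lambda\), this gives the explicit bound
\[
\prod_{i\ne j}\rho(\lambda_i^p,\lambda_j^p)\ge \exp\bigl(C_\epsilon K_p\log\delta_\Lambda\bigr)=\delta_\Lambda^{C_\epsilon K_p}.
\]

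The main obstacle is this comparison step: we have to pass from the separation bound to control of \(1-\rho\) via the \((1-x^2)(1-y^2)/(1-xy)^2\) identity, and keep all constants uniform in \(j\). The bookkeeping for the finite exceptional set \(F\) is routine.
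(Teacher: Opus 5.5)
Your proof is correct and follows essentially the same route as the paper: uniform separation of $\Lambda^p$ via Lemma~\ref{aa}, a pointwise comparison of $1-\rho(\lambda_i^p,\lambda_j^p)$ with $1-\rho(\lambda_i,\lambda_j)$, and the two logarithmic inequalities. The only differences are cosmetic. You work with $1-\rho^2$ and the identity $(1-x^2)(1-y^2)/(1-xy)^2$, whereas the paper uses $1-\rho$ and the identity $(1-x)(1+y)/(1-xy)$. Your splitting off of the finite set $F$ is harmless but unnecessary: $\inf\Lambda>0$ already lets Lemma~\ref{aa} apply to all of $\Lambda$, and your comparison inequality holds on all of $(0,1)$.
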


\begin{proof}
Since the mapping \(t\mapsto t^p\) is strictly increasing on
\((0,1)\), the points of \(\Lambda^p\) are distinct.

For distinct \(i,j\in\mathbb N\), put
\(
\rho_{ij}
:=
|b_{\lambda_i}(\lambda_j)|
=
\rho(\lambda_i,\lambda_j)
\)
and
\(
\rho_{ij}^{(p)}
:=
|b_{\lambda_i^p}(\lambda_j^p)|
=
\rho(\lambda_i^p,\lambda_j^p).
\)
Let
\(
x_{ij}:=\max\{\lambda_i,\lambda_j\},
y_{ij}:=\min\{\lambda_i,\lambda_j\}.
\)
Then
\(
\rho_{ij}
=
\frac{x_{ij}-y_{ij}}{1-x_{ij}y_{ij}},
\)
and hence
\(
1-\rho_{ij}
=
\frac{(1-x_{ij})(1+y_{ij})}
{1-x_{ij}y_{ij}}.
\)
Similarly,
\(
1-\rho_{ij}^{(p)}
=
\frac{(1-x_{ij}^p)(1+y_{ij}^p)}
{1-x_{ij}^py_{ij}^p}.
\)
Therefore,
\begin{equation}
\label{eq:one-minus-rho-ratio}
\frac{1-\rho_{ij}^{(p)}}{1-\rho_{ij}}
=
\frac{1-x_{ij}^p}{1-x_{ij}}
\cdot
\frac{1+y_{ij}^p}{1+y_{ij}}
\cdot
\frac{1-x_{ij}y_{ij}}
{1-x_{ij}^py_{ij}^p}.
\end{equation}

For every \(0<t<1\), one has
\begin{equation}
\label{eq:power-boundary-comparison}
\min\{1,p\}
\leq
\frac{1-t^p}{1-t}
\leq
\max\{1,p\}.
\end{equation}
Moreover,
\(
\frac{1+y_{ij}^p}{1+y_{ij}}
\leq 2.
\)
Applying \eqref{eq:power-boundary-comparison} to \(x_{ij}\) and
\(x_{ij}y_{ij}\), and using
\eqref{eq:one-minus-rho-ratio}, we obtain
\begin{equation}
\label{eq:one-minus-rho-comparison}
1-\rho_{ij}^{(p)}
\leq
K_p(1-\rho_{ij}),
\end{equation}
where
\(
K_p
:=
2\frac{\max\{1,p\}}{\min\{1,p\}}.
\)

Fix \(j\in\mathbb N\). Since \(\Lambda\in\mathcal C\),
\(
\prod_{i\ne j}\rho_{ij}
\geq
\delta_\Lambda.
\)
Because every factor \(\rho_{ij}\) belongs to \((0,1]\), the product
is no larger than any one of its factors. Thus for \(i\ne j\),
\(
\rho_{ij}\geq\delta_\Lambda
\). Since \(\Lambda\in\mathcal C\), it has no accumulation point in
\(\mathbb D\). As \(\Lambda\subset(0,1)\), it follows that
\(
\inf_{\lambda\in\Lambda}\lambda>0.
\)
Therefore Lemma~\ref{aa} applies to \(\Lambda\).
By Lemma~\ref{aa}, there exists a constant \(C_1>0\), independent of
\(i\) and \(j\), such that
\(
\rho_{ij}^{(p)}
\geq
C_1\rho_{ij}
\geq
C_1\delta_\Lambda.
\)
Set
\(
\eta_0
:=
\min\left\{\frac12,C_1\delta_\Lambda\right\}>0.
\)
Then
\begin{equation}
\label{eq:rho-p-uniform-lower-bound}
\rho_{ij}^{(p)}\geq\eta_0,
\qquad i\ne j.
\end{equation}

For every \(t\in[\eta_0,1]\),
\(
-\log t
=
\int_t^1\frac{ds}{s}
\leq
\frac{1-t}{\eta_0}.
\)
It follows from \eqref{eq:rho-p-uniform-lower-bound} and
\eqref{eq:one-minus-rho-comparison} that
\[
\sum_{i\ne j}-\log\rho_{ij}^{(p)}
\leq
\frac{1}{\eta_0}
\sum_{i\ne j}\left(1-\rho_{ij}^{(p)}\right)
\leq
\frac{K_p}{\eta_0}
\sum_{i\ne j}\left(1-\rho_{ij}\right).
\]
Since
\(
1-t\leq-\log t\) for \( 0<t\leq1,\)
we further obtain
\begin{equation}
\label{eq:log-sum-comparison}
\sum_{i\ne j}-\log\rho_{ij}^{(p)}
\leq
\frac{K_p}{\eta_0}
\sum_{i\ne j}-\log\rho_{ij}.
\end{equation}

The infinite product
\(\prod_{i\ne j}\rho_{ij}\) is bounded below by
\(\delta_\Lambda>0\). Hence the series
\(\sum_{i\ne j}-\log\rho_{ij}\) converges, and, by passing to the
limit through finite partial products,
\[
\sum_{i\ne j}-\log\rho_{ij}
=
-\log\left(\prod_{i\ne j}\rho_{ij}\right)
\leq
-\log\delta_\Lambda.
\]
Combining this with \eqref{eq:log-sum-comparison}, we get
\(
\sum_{i\ne j}-\log\rho_{ij}^{(p)}
\leq
-\frac{K_p}{\eta_0}\log\delta_\Lambda.
\)

Exponentiating yields
\(
\prod_{i\ne j}\rho_{ij}^{(p)}
\geq
\delta_\Lambda^{K_p/\eta_0}.
\)
The lower bound is positive and independent of \(j\). Therefore,
\(
\delta_{\Lambda^p}
=
\inf_{j\in\mathbb N}
\prod_{i\ne j}
|b_{\lambda_i^p}(\lambda_j^p)|
\geq
\delta_\Lambda^{K_p/\eta_0}
>0.
\)
Consequently,
\(
\Lambda^p\in\mathcal C.
\)
\end{proof}

\begin{cor}
Let \(\Lambda\subset(0,1)\) and \(p>0\). Then
\(
\Lambda\in\mathcal C\) if and only if
\(
\Lambda^p\in\mathcal C.
\)
\end{cor}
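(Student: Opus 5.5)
The forward implication is exactly Proposition~\ref{bb}. For the converse, I will apply Proposition~\ref{bb} to the sequence \(\Lambda^p\) with exponent \(1/p\), using the fact that the power maps compose, \((\lambda^p)^{1/p}=\lambda\) for \(\lambda\in(0,1)\).

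The steps, in order:
\begin{enumerate}[\rm (1)]
\item Assume \(\Lambda\in\mathcal C\). Since \(\Lambda\subset(0,1)\), Proposition~\ref{bb} gives \(\Lambda^p\in\mathcal C\) directly.
\item Conversely, assume \(\Lambda^p\in\mathcal C\). Set \(\Gamma:=\Lambda^p=\{\lambda_j^p\}_{j\in\mathbb N}\). Then \(\Gamma\subset(0,1)\), since \(t\mapsto t^p\) maps \((0,1)\) into itself.
\item The points of \(\Gamma\) are distinct, because \(t\mapsto t^p\) is strictly increasing on \((0,1)\). This is implicit in the hypothesis \(\Gamma\in\mathcal C\) anyway.
\item Apply Proposition~\ref{bb} to \(\Gamma\) with exponent \(q:=1/p>0\). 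This yields \(\Gamma^{q}\in\mathcal C\).
\item Observe that \(\Gamma^{q}=\{(\lambda_j^p)^{1/p}\}_{j\in\mathbb N}=\{\lambda_j\}_{j\in\mathbb N}=\Lambda\), with the same enumeration. Hence \(\Lambda\in\mathcal C\).
\end{enumerate}

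The only point needing care is checking that the hypotheses of Proposition~\ref{bb} hold for \(\Gamma\). These are membership in \(\mathcal C\) and containment in \(\(0,1)\)\). Both hold by the assumption and by step (2). No quantitative constants need to be tracked, since the Carleson condition is purely qualitative here. There is no genuine obstacle: the result is an immediate consequence of Proposition~\ref{bb} together with the invertibility of the power map on \((0,1)\).
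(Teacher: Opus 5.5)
Your proposal is correct and is exactly the paper's proof. The forward direction is Proposition~\ref{bb}, and the converse applies Proposition~\ref{bb} to \(\Lambda^p\subset(0,1)\) with exponent \(1/p\), using \((\Lambda^p)^{1/p}=\Lambda\).
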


\begin{proof}
If \(\Lambda\in\mathcal C\), then Proposition~\ref{bb} implies that
\(
\Lambda^p\in\mathcal C.
\)

Conversely, suppose that \(\Lambda^p\in\mathcal C\). Since
\(\Lambda^p\subset(0,1)\) and \(1/p>0\), Proposition~\ref{bb}, applied
to \(\Lambda^p\) with exponent \(1/p\), yields
\(
(\Lambda^p)^{1/p}\in\mathcal C.
\)
Since
\(
(\Lambda^p)^{1/p}=\Lambda,
\)
we conclude that \(\Lambda\in\mathcal C\).
\end{proof}

Let
\(
\Lambda=\bigcup_{r=1}^{N}\Lambda_r\subset(0,1),
\
\Lambda_r\in\mathcal C,
\)
and let \(p>0\). The following theorem shows that the points of
\(\Lambda\) and \(\Lambda^p\) can be grouped into the same finite
clusters.

\begin{thm}
\label{thm:simultaneous-cluster-decomposition}
Let
\(
\Lambda=\bigcup_{r=1}^{N}\Lambda_r\subset(0,1),
\
\Lambda_r\in\mathcal C,
\)
and let \(p>0\). Then, for every \(0<\eta<1\), there exists a
partition
\[
\Lambda
=
\mathop{\dot\bigcup}_{n\in\mathbb N^+}\sigma_n,
\qquad
\sigma_n
=
\{\lambda_{n,1},\ldots,\lambda_{n,m_n}\},
\]
such that, upon setting
\(
\sigma_n^p
:=
\{\lambda_{n,1}^p,\ldots,\lambda_{n,m_n}^p\},
\)
one has
\(
\Lambda^p
=
\mathop{\dot\bigcup}_{n\in\mathbb N^+}\sigma_n^p,
\)
and both partitions
\(
\Lambda
=
\mathop{\dot\bigcup}_{n\in\mathbb N^+}\sigma_n
\) and \(
\Lambda^p
=
\mathop{\dot\bigcup}_{n\in\mathbb N^+}\sigma_n^p
\)
satisfy properties \textup{(i)}--\textup{(iv)} of
Lemma~\ref{cc}, with the prescribed parameter \(\eta\) in
property \textup{(ii)}. The constants occurring in
properties \textup{(iii)} and \textup{(iv)} may be different for the
two partitions.
\end{thm}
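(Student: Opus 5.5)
The plan is to take the partition supplied by Lemma~\ref{cc} for \(\Lambda\), run with a smaller parameter \(\eta'\), and transport it to \(\Lambda^p\) through \(\lambda\mapsto\lambda^p\). Lemma~\ref{aa} and the explicit bound in Proposition~\ref{bb} control properties (ii) and (iii) after the transport. Property (iv) for \(\Lambda^p\) is the delicate point.

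First, \(a:=\inf_{\lambda\in\Lambda}\lambda>0\). Each \(\Lambda_r\in\mathcal C\) has no accumulation point in \(\mathbb D\) and lies in \((0,1)\), so \(\inf\Lambda_r>0\), and there are only finitely many \(r\). Lemma~\ref{aa} therefore gives constants \(C_1,C_2\), depending only on \(p\) and \(a\), with
\[
C_1\rho(\lambda,\mu)\le\rho(\lambda^p,\mu^p)\le C_2\rho(\lambda,\mu)
\]
for all distinct \(\lambda,\mu\in\Lambda\). Set \(\eta':=\min\{\eta,\eta/C_2\}\) and apply Lemma~\ref{cc} to \(\Lambda\) with parameter \(\eta'\), obtaining \(\Lambda=\dot\bigcup_n\sigma_n\). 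Since \(t\mapsto t^p\) is injective on \((0,1)\), the sets \(\sigma_n^p\) partition \(\Lambda^p\) and \(|\sigma_n^p|=|\sigma_n|\le N\), which gives (i) for both partitions. Property (ii) for \(\Lambda\) holds because \(\eta'\le\eta\). For \(\Lambda^p\), distinct \(\lambda^p,\mu^p\in\sigma_n^p\) satisfy \(\rho(\lambda^p,\mu^p)\le C_2\rho(\lambda,\mu)<C_2\eta'\le\eta\).

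For (iii) on the \(\Lambda^p\) side, note that any selector of \(\{\sigma_n^p\}\) has the form \(\Lambda_0^p\), where \(\Lambda_0\) is a selector of \(\{\sigma_n\}\). By Lemma~\ref{cc}(iii), \(\Lambda_0\in\mathcal C\) with \(\delta_{\Lambda_0}\ge\delta\). I would then reread the proof of Proposition~\ref{bb} to check that its bound is uniform over selectors. It gives
\[
\delta_{\Lambda_0^p}\ge\delta_{\Lambda_0}^{K_p/\eta_0},\qquad \eta_0=\min\{\tfrac12,\,C_1\delta_{\Lambda_0}\}.
\]
Lemma~\ref{aa} may be applied with the same constant \(C_1\), because \(\inf\Lambda_0\ge a\). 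Replacing \(\delta_{\Lambda_0}\) by its lower bound \(\delta\) (monotonicity is immediate) yields a uniform constant
\[
\delta^{(p)}:=\delta^{K_p/\min\{1/2,\,C_1\delta\}}
\]
for all selectors.

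The main obstacle is (iv) for \(\Lambda^p\). The functions \(D_n\) for \(\Lambda\) cannot simply be transported, since \(z\mapsto z^{1/p}\) is not analytic on \(\mathbb D\). I would first try to show that (iv) is a consequence of (i)–(iii) together with the fact that \(\Lambda^p\) is a finite union of Carleson sequences, which holds by Proposition~\ref{bb} since \(\Lambda^p=\bigcup_r\Lambda_r^p\). This amounts to re-running Hartmann's construction for \(\Lambda^p\) with the fixed clusters \(\sigma_n^p\), along the following lines.
\begin{enumerate}[(a)]
\item Let \(B_n\) be the Blaschke product with zeros \(\sigma_n^p\), and let \(B^{(n)}\) be the product over the remaining clusters.
\item Use (ii)–(iii) to bound \(|B^{(n)}|\) from below on \(\sigma_n^p\), uniformly in \(n\).
\item Take a Beurling–Jones linear interpolation family \(\{g_n\}\) for the uniformly Carleson selector, with \(\sum_n|g_n|\le M\), and correct it on each cluster by a bounded finite-dimensional interpolation.
\end{enumerate}
Step (c) is legitimate because clusters have at most \(N\) points and uniformly small pseudo-hyperbolic diameter. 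If the direct bookkeeping of \(\sum_n|D_n^{(p)}|\) becomes unwieldy, the fallback is to decrease \(\eta'\) further, so that clusters are tiny compared with the inter-cluster separation \(\delta^{(p)}\), and to quote Hartmann's Proposition~1.1 in the form where the clusters are prescribed by this separation property.
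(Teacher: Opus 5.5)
Your treatment of (i)--(iii) is the same as the paper's. This includes the point that the proof of Proposition~\ref{bb} is uniform over selectors, because it uses only \(\inf\Lambda_0\ge a\), \(\rho\ge\delta\) and \(\sum_{i\ne j}-\log\rho_{ij}\le-\log\delta\). Your steps (a)--(b) are also what the paper does for (iv): it proves \(\prod_{k\ne n}|B_k^{(p)}(\zeta)|\ge\delta_p^N\) on \(\sigma_n^p\) and then defers to Hartmann's construction.

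The gap is in how you propose to finish (iv). In (c), the Beurling functions \(g_n\) of the selector \(\{\lambda_{n,1}^p\}\) are neither \(1\) on the rest of \(\sigma_n^p\) nor \(0\) on the unselected points of the other clusters, and the ``correction'' is the whole difficulty. The natural candidate \(D_n=g_nB^{(n)}h_n\) does keep \(\sum_n|D_n|\le M\sup_n\|h_n\|_\infty\). But it needs \(|g_n|\) bounded below on \(\sigma_n^p\), and Schwarz--Pick gives this only if the cluster diameter is small compared with \(1/M\). Here \(M\) depends on \(\delta^{(p)}\), hence on the partition that Lemma~\ref{cc} produced from \(\eta'\). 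So ``uniformly small diameter'' is not small relative to anything you control. The fallback fails for the same reason. Decreasing \(\eta'\) gives a new partition with a new, uncontrolled \(\delta\). Moreover, Lemma~\ref{cc} is a pure existence statement: it certifies (iv) only for the partition it constructs, not for a prescribed family of clusters.

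The missing idea is to shrink the parameter on the \(\Lambda^p\) side instead. By Proposition~\ref{bb}, \(\Lambda^p=\bigcup_r\Lambda_r^p\) with \(\Lambda_r^p\in\mathcal C\). So Lemma~\ref{cc} applies to \(\Lambda^p\) with any parameter \(\eta''\in(0,\delta^{(p)}]\), and it gives a partition \(\{\tau_m\}\) with functions \(D_m^\tau\in H^\infty(\mathbb D)\) satisfying \(\sum_m|D_m^\tau|\le M_\tau\). Your (iii) implies \(\rho(x,y)\ge\delta^{(p)}\) whenever \(x,y\) lie in different clusters \(\sigma_n^p,\sigma_k^p\): place both in one selector, and note that a Carleson product is at most each of its factors. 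Distinct points of one \(\tau_m\) are at distance \(<\eta''\le\delta^{(p)}\), so every \(\tau_m\) lies in a single \(\sigma_n^p\). Now set \(D_n^{(p)}:=\sum_{m:\,\tau_m\subset\sigma_n^p}D_m^\tau\). This sum has at most \(N\) terms, equals \(1\) on \(\sigma_n^p\) and \(0\) on \(\Lambda^p\setminus\sigma_n^p\), and satisfies \(\sum_n|D_n^{(p)}|\le\sum_m|D_m^\tau|\le M_\tau\). This proves (iv) for the powered partition without re-running Hartmann's construction, which is a step the paper itself leaves at the level of a citation.
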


\begin{proof}
We first record two uniform comparison estimates.
Since each \(\Lambda_r\in\mathcal C\), the set \(\Lambda_r\) has no
accumulation point in \(\mathbb D\). Because
\(\Lambda_r\subset(0,1)\), it follows that
\(
\inf_{\lambda\in\Lambda_r}\lambda>0,
\ 1\leq r\leq N.
\)
Since the union is finite, we have
\(
a
:=
\inf_{\lambda\in\Lambda}\lambda
=
\min_{1\leq r\leq N}
\inf_{\lambda\in\Lambda_r}\lambda
>0.
\)
Therefore, Lemma~\ref{aa} applies to the whole set \(\Lambda\) and
gives constants
\(
0<c_p\leq C_p<\infty
\)
such that
\begin{equation}
\label{eq:uniform-rho-power-comparison}
c_p\rho(\lambda,\mu)
\le
\rho(\lambda^p,\mu^p)
\le
C_p\rho(\lambda,\mu)
\end{equation}
for all distinct \(\lambda,\mu\in\Lambda\).

Moreover, the calculation in
\eqref{eq:one-minus-rho-comparison} gives
\begin{equation}
\label{eq:uniform-one-minus-rho-power}
1-\rho(\lambda^p,\mu^p)
\le
K_p\bigl(1-\rho(\lambda,\mu)\bigr),
\end{equation}
where
\(
K_p
=
2\frac{\max\{1,p\}}{\min\{1,p\}}.
\)

Fix \(0<\eta<1\), and put
\(
\eta_0
:=\frac{\min\{\eta,1/3\}}
     {2\max\{1,C_p\}}.
\)
Then \(0<\eta_0<1\). Applying Lemma~\ref{cc} to \(\Lambda\)
with the parameter \(\eta_0\), we obtain a partition
\(
\Lambda
=
\mathop{\dot\bigcup}_{n\in\mathbb N^+}\sigma_n,
\
\sigma_n
=
\{\lambda_{n,1},\ldots,\lambda_{n,m_n}\},
\)
satisfying properties \textup{(i)}--\textup{(iv)} of
Lemma~\ref{cc}.
Since the mapping \(t\mapsto t^p\) is injective on \((0,1)\), the sets
\(
\sigma_n^p
=
\{\lambda_{n,1}^p,\ldots,\lambda_{n,m_n}^p\}
\)
form a partition
\(
\Lambda^p
=
\mathop{\dot\bigcup}_{n\in\mathbb N^+}\sigma_n^p.
\)
We verify that this partition satisfies the required properties.

\medskip

\noindent
\textup{(i)}
For every \(n\in\mathbb N^+\), injectivity of the power map gives
\(
|\sigma_n^p|
=
|\sigma_n|
=
m_n
\le N.
\)

\medskip

\noindent
\textup{(ii)}
Let \(n\in\mathbb N^+\), and let
\(\lambda,\mu\in\sigma_n\) be distinct. Since the original
decomposition was chosen with parameter \(\eta_0\),
\(
\rho(\lambda,\mu)<\eta_0.
\)
Using \eqref{eq:uniform-rho-power-comparison}, we obtain
\(
\rho(\lambda^p,\mu^p)
\le
C_p\rho(\lambda,\mu)
<
C_p\eta_0
\le
\frac{\eta}{2}
<
\eta.
\)
Thus the powered clusters satisfy property \textup{(ii)}.

Notice also that \(\eta_0<\eta\), so the original clusters
\(\sigma_n\) themselves satisfy property \textup{(ii)} with the
parameter \(\eta\). Moreover, the choice of \(\eta_0\) gives
\(\rho(\lambda,\mu)<\frac13\) and \(\rho(\lambda^p,\mu^p)<\frac13
\) for all \(n\in\mathbb N^+\) and all distinct
\(\lambda,\mu\in\sigma_n\).

\medskip

\noindent
\textup{(iii)}
By property \textup{(iii)} for the original decomposition, there
exists \(\delta>0\) such that, for every choice
\(
\lambda_n^0\in\sigma_n\) for \(n\in\mathbb N^+,\)
the selector
\(
\Lambda_0
=
\{\lambda_n^0:n\in\mathbb N^+\}
\)
belongs to \(\mathcal C\) and satisfies
\(
\delta_{\Lambda_0}\ge\delta.
\)

We prove that the powered selectors satisfy the Carleson condition
with a lower bound that is uniform over all choices. Set
\(
\Lambda_0^p
=
\{(\lambda_n^0)^p:n\in\mathbb N^+\}.
\)
For any two distinct points \(\lambda_i^0,\lambda_j^0\in\Lambda_0\),
we have
\(
\rho(\lambda_i^0,\lambda_j^0)
\ge
\delta_{\Lambda_0}
\ge
\delta.
\)
Indeed, the Carleson product at \(\lambda_j^0\) is no larger than any
of its factors.
It follows from \eqref{eq:uniform-rho-power-comparison} that
\(
\rho\bigl((\lambda_i^0)^p,(\lambda_j^0)^p\bigr)
\ge
c_p\delta.
\)
Put
\(
\varepsilon_p
:=
\min\left\{\frac12,c_p\delta\right\}>0.
\)
Then
\(
\rho\bigl((\lambda_i^0)^p,(\lambda_j^0)^p\bigr)
\ge
\varepsilon_p
\)
for all \(i\ne j\), uniformly over all selectors.

For \(t\in[\varepsilon_p,1]\), one has
\(
-\log t
\le
\frac{1-t}{\varepsilon_p}.
\)
Hence, for every fixed \(j\),
\begin{align*}
\sum_{i\ne j}
-\log
\rho\bigl((\lambda_i^0)^p,(\lambda_j^0)^p\bigr)
&\le
\frac{1}{\varepsilon_p}
\sum_{i\ne j}
\left(
1-
\rho\bigl((\lambda_i^0)^p,(\lambda_j^0)^p\bigr)
\right)
\le
\frac{K_p}{\varepsilon_p}
\sum_{i\ne j}
\left(
1-\rho(\lambda_i^0,\lambda_j^0)
\right)
\\
&\le
\frac{K_p}{\varepsilon_p}
\sum_{i\ne j}
-\log\rho(\lambda_i^0,\lambda_j^0)
\le
-\frac{K_p}{\varepsilon_p}\log\delta.
\end{align*}
In the second inequality we used
\eqref{eq:uniform-one-minus-rho-power}, and in the last inequality we
used \(\delta_{\Lambda_0}\ge\delta\).
Passing first through finite partial products and then to the limit,
we obtain
\(
\prod_{i\ne j}
\rho\bigl((\lambda_i^0)^p,(\lambda_j^0)^p\bigr)
\ge
\delta^{K_p/\varepsilon_p}.
\)
Consequently,
\(
\delta_{\Lambda_0^p}
\ge
\delta_p,
\
\delta_p
:=
\delta^{K_p/\varepsilon_p}>0.
\)
The constant \(\delta_p\) is independent of the choice of the points
\(\lambda_n^0\). Thus property \textup{(iii)} holds for the powered
partition.

\medskip

\noindent
\textup{(iv)}
For each \(n\in\mathbb N^+\), let
\(
B_n^{(p)}(z)
:=
\prod_{\zeta\in\sigma_n^p} b_\zeta(z).
\)
Fix \(n\in\mathbb N^+\) and
\(\zeta\in\sigma_n^p\). For every \(k\ne n\), choose
\(\zeta_k\in\sigma_k^p\) such that
\(
\rho(\zeta,\zeta_k)
=
\min_{\xi\in\sigma_k^p}\rho(\zeta,\xi).
\)
Taking \(\zeta\) as the selected point in the \(n\)-th cluster and
\(\zeta_k\) as the selected point in the \(k\)-th cluster, property
\textup{(iii)} gives
\(
\prod_{k\ne n}\rho(\zeta,\zeta_k)\geq\delta_p.
\)
Since \(|\sigma_k^p|\leq N\), we have
\[
\prod_{k\ne n}|B_k^{(p)}(\zeta)|
=
\prod_{k\ne n}
\prod_{\xi\in\sigma_k^p}\rho(\zeta,\xi)
\geq
\prod_{k\ne n}\rho(\zeta,\zeta_k)^N
\geq
\delta_p^N.
\]
Consequently,
\(
\inf_{n\in\mathbb N^+}
\inf_{\zeta\in\sigma_n^p}
\prod_{k\ne n}
\left|B_k^{(p)}(\zeta)\right|
\geq
\delta_p^N
>0.
\)
The preceding estimate gives the uniform lower bound on the products
of the Blaschke factors corresponding to the other clusters that is
used in Hartmann's construction. Moreover, property \textup{(iii)}
shows that every selector of the powered clusters is a Carleson
sequence with the common lower bound \(\delta_p\), while the final
observation in \textup{(ii)} ensures that distinct points in the same
powered cluster have pseudo-hyperbolic distance less than \(1/3\).
Consequently, the proof of
\cite[Theorem~3.20 and Section~3.5]{AHartmann}
applies to the family
\(
\{\sigma_n^p\}_{n\in\mathbb N^+},
\)
with the preceding estimates replacing the corresponding consequences
of the generalized Carleson condition. Hence there exist a constant
\(M_p<\infty\) and functions
\(
D_n^{(p)}\in H^\infty(\mathbb D),
\ n\in\mathbb N^+,
\)
such that
\[
D_n^{(p)}(\lambda)
=
\begin{cases}
1, & \lambda\in\sigma_n^p,\\
0, & \lambda\in\Lambda^p\setminus\sigma_n^p,
\end{cases}
\]
and
\(
\sum_{n\in\mathbb N^+}
\left|D_n^{(p)}(z)\right|
\leq
M_p,
\
z\in\mathbb D.
\)
Thus property \textup{(iv)} holds for the powered partition.

Therefore the same clustering of the indices gives admissible
decompositions of both \(\Lambda\) and \(\Lambda^p\).
\end{proof}

\section{Trace Spaces under Power Perturbations}
The main result of this section is the trace-space identity
\(
H^2(\Lambda)=H^2(\Lambda^p)
\)
for every finite union of Carleson sequences
\(\Lambda\subset[0,1)\) and every \(p>0\), under the natural coordinate
identification. The proof uses the compatible cluster decompositions
from Section~2 and uniform comparisons of the corresponding divided
differences.

We first recall the ordinary Newton divided differences and their
pseudo-hyperbolic analogue used in Hartmann's trace theorem.

\begin{definition}
Let
\(
z_1,\ldots,z_m\in\mathbb C
\)
be an ordered collection of pairwise distinct points, and let
\(
a(z_j)=a_j\in\mathbb C\) for \(1\le j\le m.\)

The ordinary Newton divided differences are defined by
\(
D^0a[z_j]:=a_j\) for \(1\le j\le m,\)
and, for \(1\le k\le m-1\),
\[
D^ka[z_1,\ldots,z_{k+1}]
:=
\frac{
D^{k-1}a[z_1,\ldots,z_{k-1},z_{k+1}]
-
D^{k-1}a[z_1,\ldots,z_k]
}{
z_{k+1}-z_k
}.
\]
In particular,
\(
D^1a[z_1,z_2]
=
\frac{a_2-a_1}{z_2-z_1}.
\)

Now assume that
\(
\lambda_1,\ldots,\lambda_m\in\mathbb D
\)
are pairwise distinct. The pseudo-hyperbolic divided differences are
defined by
\(
\widetilde D^0a[\lambda_j]:=a_j,\ 1\le j\le m\),
and, for \(1\le k\le m-1\),
\[
\widetilde D^ka[\lambda_1,\ldots,\lambda_{k+1}]
:=
\frac{
\widetilde D^{k-1}a
[\lambda_1,\ldots,\lambda_{k-1},\lambda_{k+1}]
-
\widetilde D^{k-1}a
[\lambda_1,\ldots,\lambda_k]
}{
b_{\lambda_k}(\lambda_{k+1})
}.
\]
In particular,
\(
\widetilde D^1a[\lambda_1,\lambda_2]
=
\frac{a_2-a_1}{b_{\lambda_1}(\lambda_2)}.
\)
\end{definition}

\begin{remark}
The recursion defining \(D^k\) is equivalent to the usual recursive
definition of the classical divided difference. In particular,
\(D^ka[z_1,\ldots,z_{k+1}]\) is symmetric in its nodes.

In contrast,
\(\widetilde D^ka[\lambda_1,\ldots,\lambda_{k+1}]\) generally depends
on the ordering of the nodes. Its definition is adapted to the
pseudo-hyperbolic Newton expansion
\[
a(\lambda_r)
=
\sum_{k=1}^{r}
\widetilde D^{k-1}a[\lambda_1,\ldots,\lambda_k]
\prod_{\ell=1}^{k-1}
b_{\lambda_\ell}(\lambda_r),
\qquad 1\le r\le m,
\]
where an empty product is understood to be equal to \(1\).
\end{remark}

We now introduce the trace spaces and the sequence space appearing in
Hartmann's trace theorem \cite{Hartmann}. Let
\(
X\subset\operatorname{Hol}(\mathbb D)
\)
be a vector space, and let
\(
\Gamma=\{\gamma_j\}_{j\in\mathbb N^+}\subset\mathbb D
\)
be a sequence of distinct points. We write
\(
X|_\Gamma
:=
\{f|_\Gamma:f\in X\}
\subset\mathbb C^\Gamma
\)
for the trace space of \(X\) on \(\Gamma\), and
\(
X(\Gamma)
:=
\left\{
\{f(\gamma_j)\}_{j\in\mathbb N^+}:f\in X
\right\}
\)
for its sequence representation with respect to the prescribed
enumeration of \(\Gamma\).

Let
\(
\Lambda
=
\bigcup_{r=1}^{N}\Lambda_r\) for \(\Lambda_r\in\mathcal C\)
and let
\(
\Lambda
=
\mathop{\dot\bigcup}_{n\in\mathbb N^+}\sigma_n,
\
\sigma_n
=
\{\lambda_{n,1},\ldots,\lambda_{n,m_n}\},
\
m_n:=|\sigma_n|,
\)
be a decomposition satisfying properties
\textup{(i)}--\textup{(iv)} of Lemma~\ref{cc}.
An ordering of the points in each cluster \(\sigma_n\)
is fixed throughout.

Set
\(
\Lambda_0
:=
\{\lambda_{n,1}:n\in\mathbb N^+\}.
\)
By property \textup{(iii)} of Lemma~\ref{cc}, the sequence
\(\Lambda_0\) belongs to \(\mathcal C\). Define
\(
\ell_0(X)
:=
X(\Lambda_0)
=
\left\{
\{g(\lambda_{n,1})\}_{n\in\mathbb N^+}:g\in X
\right\}.
\)

We then define
\[
\ell_{\Lambda,N,\sigma}(X)
:=
\left\{
a\in\mathbb C^\Lambda:
\left\{
\max_{1\le k\le m_n}
\left|
\widetilde D^{\,k-1}a
[\lambda_{n,1},\ldots,\lambda_{n,k}]
\right|
\right\}_{n\in\mathbb N^+}
\in\ell_0(X)
\right\},
\]
where \(\sigma=\{\sigma_n\}_{n\in\mathbb N^+}\).

In particular, when \(X=H^2(\mathbb D)\), the Shapiro--Shields trace
theorem~\cite{ShapiroShields} gives
\[
\ell_0(H^2)
=
\left\{
\{a_n\}_{n\in\mathbb N^+}:
\sum_{n\in\mathbb N^+}
(1-|\lambda_{n,1}|^2)|a_n|^2
<\infty
\right\}.
\]
Equivalently, one may replace \(1-|\lambda_{n,1}|^2\) by
\(1-|\lambda_{n,1}|\), since
\(
1-|\lambda|
\le
1-|\lambda|^2
\le
2(1-|\lambda|),
\ \lambda\in\mathbb D.
\)

\begin{definition}
Let
\(
X\subset\operatorname{Hol}(\mathbb D).
\)
The space \(X\) is called \(\mathcal C\)-stable if, for every pair of
Carleson sequences
\(
\Lambda=\{\lambda_i\}_{i\in\mathbb N^+},
\widetilde{\Lambda}
=
\{\widetilde{\lambda}_i\}_{i\in\mathbb N^+},
\)
satisfying
\(
\sup_{i\in\mathbb N^+}
\left|
b_{\lambda_i}(\widetilde{\lambda}_i)
\right|
<1,
\)
we have
\(
X(\Lambda)=X(\widetilde{\Lambda}).
\)
\end{definition}
Hartmann proved that both \(H^\infty(\mathbb D)\) and
\(H^2(\mathbb D)\) are \(\mathcal C\)-stable; see
\cite[Section~2.1]{Hartmann}.

The following lemma is Hartmann's trace theorem
\cite[Theorem~1.4]{Hartmann}.
\begin{lem}
\label{zz}
Let
\(
X\subset\operatorname{Hol}(\mathbb D)
\)
be a \(\mathcal C\)-stable vector space satisfying
\(
H^\infty(\mathbb D)X\subset X.
\)
Let
\(
\Lambda=\bigcup_{r=1}^{N}\Lambda_r,
\
\Lambda_r\in\mathcal C,
\)
and let
\(
\Lambda
=
\mathop{\dot\bigcup}_{n\in\mathbb N^+}\sigma_n
\)
be any decomposition satisfying properties
\textup{(i)--(iv)} of Lemma~\ref{cc}. Then
\(
X|_\Lambda
=
\ell_{\Lambda,N,\sigma}(X),
\)
where
\(
\sigma=\{\sigma_n\}_{n\in\mathbb N^+}.
\)
\end{lem}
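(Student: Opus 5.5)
Since Lemma~\ref{zz} is Hartmann's trace theorem \cite[Theorem~1.4]{Hartmann}, the plan below only reconstructs the structure of its proof, proving the two inclusions separately. Two facts about a Carleson selector \(\Gamma=\{\gamma_n\}\) (for instance \(\Lambda_0\), or \(\Lambda_k:=\{\lambda_{n,k}\}\)) will be used throughout.

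\emph{Solidity.} Since \(H^\infty(\Gamma)=\ell^\infty\) by Carleson's theorem and \(H^\infty(\mathbb D)X\subset X\), the space \(X(\Gamma)\) is an \(\ell^\infty\)-module. In particular, if \(|c_n|\le C|a_n|\) with \(\{a_n\}\in X(\Gamma)\), then \(\{c_n\}\in X(\Gamma)\), and finite maxima of such sequences also lie in \(X(\Gamma)\).

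\emph{Moving the selector.} By property (ii), \(\rho(\lambda_{n,k},\lambda_{n,1})<\eta<1\). Hence \(\mathcal C\)-stability gives \(X(\Lambda_k)=X(\Lambda_0)=\ell_0(X)\) for every \(k\), where clusters with \(m_n<k\) are filled by a fixed point of the cluster. Because of these two facts, it suffices to control each order \(k\le N\) separately, with values taken at any selector.

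For \(X|_\Lambda\subset\ell_{\Lambda,N,\sigma}(X)\), I would argue by induction on \(k\) to produce functions \(f_k\in X\) with
\(f_{k-1}(\lambda_{n,k})\asymp\widetilde D^{k-1}a[\lambda_{n,1},\ldots,\lambda_{n,k}]\), uniformly in \(n\), where \(a=f|_\Lambda\). The induction runs as follows.

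\begin{enumerate}[\rm (1)]
\item Start with \(f_0=f\).
\item Interpolate \(f_{k-1}|_{\Lambda_k}\) by some \(h_k\in X\). This is possible since \(f_{k-1}|_{\Lambda_k}\in X(\Lambda_k)\).
\item Divide by the Blaschke product \(B_k\) of \(\Lambda_k\), setting \(f_k=(f_{k-1}-h_k)/B_k\).
\end{enumerate}

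At a point \(\lambda_{n,k+1}\), the factor \(B_k(\lambda_{n,k+1})\) equals \(b_{\lambda_{n,k}}(\lambda_{n,k+1})\) times a product bounded above and below, by property (iii). The correction \(h_k\) is handled the same way one level down, using the Schwarz--Pick type bound \(|h(\lambda)-h(\mu)|\lesssim\rho(\lambda,\mu)\) on clusters together with property (iv). The step I expect to be the main obstacle is the division step: one needs \((g/B)\in X\) whenever \(g\in X\) vanishes on a Carleson sequence with Blaschke product \(B\), with control independent of the cluster. For \(H^2\) and \(H^\infty\) this follows from inner--outer factorization. For general \(X\) I would first try to obtain it by combining \(\mathcal C\)-stability (slightly perturbing the zero set) with the \(H^\infty\)-module property, as Hartmann does.

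For the reverse inclusion \(\ell_{\Lambda,N,\sigma}(X)\subset X|_\Lambda\), take \(a\) in the sequence space. By solidity, each sequence \(c^{(k)}_n:=\widetilde D^{k-1}a[\lambda_{n,1},\ldots,\lambda_{n,k}]\) lies in \(X(\Lambda_0)\), so there are \(g_k\in X\) interpolating \(c^{(k)}\) on \(\Lambda_0\). I would then set
\[
f=\sum_{k=1}^{N}\Bigl(\sum_{n}D_n\,\pi_{n,k}\Bigr)g_k,\qquad \pi_{n,k}:=\prod_{\ell<k}b_{\lambda_{n,\ell}}.
\]
By the Newton expansion in the Remark, \(f\) reproduces \(a\) on \(\Lambda\), provided \(g_k\) is constant to sufficient order on each cluster. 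This is not automatic, so \(g_k\) must instead be chosen to interpolate suitable modified data on all of \(\Lambda\) inductively in \(k\). Membership \(\sum_n D_n\pi_{n,k}\in H^\infty\) follows from \(\sum_n|D_n|\le M_D\) in property (iv). The remaining bookkeeping of this triangular system of corrections is where I would rely on Hartmann's original argument.
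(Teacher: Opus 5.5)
The paper gives no proof of this lemma. It is quoted from \cite[Theorem~1.4]{Hartmann}, so your opening citation is exactly what the paper does.

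\textbf{Forward inclusion.} Your reconstruction has a genuine gap in $X|_\Lambda\subset\ell_{\Lambda,N,\sigma}(X)$.
\begin{enumerate}[\rm (a)]
\item As written, step (2) is vacuous: $h_k=f_{k-1}$ interpolates $f_{k-1}|_{\Lambda_k}$ and gives $f_k\equiv0$.
\item For $f_k(\lambda_{n,k+1})$ to reproduce $\widetilde D^{k}a[\lambda_{n,1},\dots,\lambda_{n,k+1}]$, $h_k$ must also take the value $f_{k-1}(\lambda_{n,k})$ at $\lambda_{n,k+1}$. This may hold only up to an error $e_n$ with $\{e_n/b_{\lambda_{n,k}}(\lambda_{n,k+1})\}_n\in X(\Lambda_0)$.
\item That is a first-order divided-difference statement for a function in $X$, i.e.\ the base case of what you are proving. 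The Schwarz--Pick bound $|h(\lambda)-h(\mu)|\le 2\|h\|_\infty\rho(\lambda,\mu)$ cannot supply it, because $h_k\in X$ (e.g.\ $H^2$), not $H^\infty$.
\end{enumerate}
So ``handled the same way one level down'' is circular, and the division property $g/B\in X$ also remains open for general $X$.

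A route that needs neither uses the maximum principle:
\begin{enumerate}[\rm (a)]
\item Move $\lambda_{n,1}$ to $0$ by a disc automorphism. This changes each $\widetilde D^{k-1}$ only by a unimodular factor, because $b_{\varphi(\lambda)}\circ\varphi$ is a rotation of $b_\lambda$.
\item Fix $\eta<R<1$ and peel off Newton terms on the circle $\{\rho(z,\lambda_{n,1})=R\}$. This gives $\max_k|\widetilde D^{k-1}a[\lambda_{n,1},\dots,\lambda_{n,k}]|\le C(N,\eta,R)\,|f(z_n)|$, where $z_n$ maximizes $|f|$ on that circle.
\item Split $\Lambda_0$ into finitely many sparse subsequences so that the perturbed sequences $\{z_n\}$ remain Carleson.
\item $\mathcal C$-stability, Carleson interpolation to glue the pieces, and your solidity remark then place the left-hand sequence in $X(\Lambda_0)$.
\end{enumerate}

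\textbf{Reverse inclusion.} You correctly see that $g_k$ interpolating only on $\Lambda_0$ is not enough. But your remedy, interpolating modified data ``on all of $\Lambda$'', is the statement itself. What works is an induction on residuals.
\begin{enumerate}[\rm (a)]
\item Suppose the current residual $a^{(k-1)}\in\ell_{\Lambda,N,\sigma}(X)$ vanishes at $\lambda_{n,1},\dots,\lambda_{n,k-1}$.
\item Interpolate $\{\widetilde D^{k-1}a^{(k-1)}[\lambda_{n,1},\dots,\lambda_{n,k}]\}_n$ on the single selector $\Lambda_k$ by some $g_k\in X$, using solidity and $\mathcal C$-stability.
\item Subtract $f_k:=\bigl(\sum_{n:\,m_n\ge k}D_n\pi_{n,k}\bigr)g_k\in X$. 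By the Newton expansion, $f_k$ vanishes at $\lambda_{n,j}$ for $j<k$ and equals $a^{(k-1)}$ at $\lambda_{n,k}$.
\item The new residual stays in $\ell_{\Lambda,N,\sigma}(X)$ only because the forward inclusion applies to $f_k$, and because $\ell_{\Lambda,N,\sigma}(X)$ is a vector space by solidity.
\item After at most $N$ steps the residual vanishes on $\Lambda$, so $a=\sum_k f_k|_\Lambda$.
\end{enumerate}
Both directions therefore rest on the forward inclusion, which is exactly the part your sketch leaves unproved.
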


\begin{lem}
\label{lem:h2-finite-trace}
Let
\(
\Lambda=\bigcup_{r=1}^{N}\Lambda_r,
\Lambda_r\in\mathcal C,
\)
and let
\[
\Lambda
=
\mathop{\dot\bigcup}_{n\in\mathbb N^+}\sigma_n,
\qquad
\sigma_n
=
\{\lambda_{n,1},\ldots,\lambda_{n,m_n}\},
\qquad
m_n:=|\sigma_n|,
\]
be a decomposition satisfying properties \textup{(i)--(iv)} of
Lemma~\ref{cc}. Set
\(
\sigma:=\{\sigma_n\}_{n\in\mathbb N^+}.
\)
Then
\(
H^2(\mathbb D)|_\Lambda
=
\ell_{\Lambda,N,\sigma}\bigl(H^2(\mathbb D)\bigr).
\)

Equivalently, a function \(a:\Lambda\to\mathbb C\) belongs to
\(H^2(\mathbb D)|_\Lambda\) if and only if
\begin{equation}
\label{eq:h2-trace-condition-lambda}
\sum_{n\in\mathbb N^+}
\left(1-|\lambda_{n,1}|\right)
\left(
\max_{1\le k\le m_n}
\left|
\widetilde D^{\,k-1}a
[\lambda_{n,1},\ldots,\lambda_{n,k}]
\right|
\right)^2
<\infty.
\end{equation}
\end{lem}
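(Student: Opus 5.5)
The first equality is a direct specialization of Lemma~\ref{zz} to \(X=H^2(\mathbb D)\). To invoke that lemma we check its two hypotheses. First, \(H^2(\mathbb D)\) is \(\mathcal C\)-stable; this is Hartmann's result \cite[Section~2.1]{Hartmann}, quoted just before Lemma~\ref{zz}. Second, \(H^\infty(\mathbb D)H^2(\mathbb D)\subset H^2(\mathbb D)\) is needed. This holds because multiplication by \(\varphi\in H^\infty\) is bounded on \(H^2\) with norm \(\|\varphi\|_\infty\), for instance via boundary values, \(\|\varphi f\|_{H^2}=\|\varphi f\|_{L^2(\mathbb T)}\le\|\varphi\|_\infty\|f\|_{H^2}\). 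With both hypotheses in place, Lemma~\ref{zz} applied to the given decomposition \(\sigma\) yields \(H^2(\mathbb D)|_\Lambda=\ell_{\Lambda,N,\sigma}(H^2(\mathbb D))\).

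For the equivalent formulation we unpack the definition of \(\ell_{\Lambda,N,\sigma}(H^2)\). By definition, \(a\in\ell_{\Lambda,N,\sigma}(H^2)\) exactly when the sequence
\[
M_n:=\max_{1\le k\le m_n}\left|\widetilde D^{\,k-1}a[\lambda_{n,1},\ldots,\lambda_{n,k}]\right|
\]
lies in \(\ell_0(H^2)=H^2(\Lambda_0)\), where \(\Lambda_0=\{\lambda_{n,1}\}\) is Carleson by property (iii) of Lemma~\ref{cc}. The Shapiro--Shields theorem \cite{ShapiroShields} describes this space as \(\ell_0(H^2)=\{\{a_n\}:\sum_n(1-|\lambda_{n,1}|^2)|a_n|^2<\infty\}\). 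Since \(M_n\ge0\), the condition is simply \(\sum_n(1-|\lambda_{n,1}|^2)M_n^2<\infty\). The elementary comparison \(1-|\lambda|\le1-|\lambda|^2\le2(1-|\lambda|)\) then shows this is equivalent to \eqref{eq:h2-trace-condition-lambda}.

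The only point needing care is that the Shapiro--Shields description is a weighted \(\ell^2\) space, and hence solid, meaning invariant under replacing entries by their moduli. We use solidity to pass from membership of \(\{M_n\}\) to the summability condition. This is immediate from the explicit form, so I do not expect a genuine obstacle. The substantive content has already been supplied by Lemma~\ref{zz} together with the \(\mathcal C\)-stability of \(H^2\).
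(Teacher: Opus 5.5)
Your proposal is correct and follows the paper's proof essentially step for step. You apply Lemma~\ref{zz} with $X=H^2(\mathbb D)$, using $\mathcal C$-stability and $H^\infty(\mathbb D)H^2(\mathbb D)\subset H^2(\mathbb D)$; you then identify $\ell_0(H^2)$ through Shapiro--Shields on the Carleson selector $\Lambda_0$ and replace the weight $1-|\lambda_{n,1}|^2$ by $1-|\lambda_{n,1}|$. The appeal to solidity is harmless but unnecessary: membership of the nonnegative sequence $\{M_n\}$ in a weighted $\ell^2$ space is literally the summability condition.
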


\begin{proof}
Since \(H^2(\mathbb D)\) is \(\mathcal C\)-stable and
\(
H^\infty(\mathbb D)H^2(\mathbb D)
\subset H^2(\mathbb D),
\)
Lemma~\ref{zz}, applied with \(X=H^2(\mathbb D)\), gives
\(
H^2(\mathbb D)|_\Lambda
=
\ell_{\Lambda,N,\sigma}\bigl(H^2(\mathbb D)\bigr).
\)

Let
\(
\Lambda_0
=
\{\lambda_{n,1}:n\in\mathbb N^+\}.
\)
By property \textup{(iii)} of Lemma~\ref{cc}, the sequence
\(\Lambda_0\) belongs to \(\mathcal C\). The Shapiro--Shields trace
theorem~\cite{ShapiroShields} yields
\[
H^2(\mathbb D)(\Lambda_0)
=
\left\{
\{c_n\}_{n\in\mathbb N^+}:
\sum_{n\in\mathbb N^+}
\left(1-|\lambda_{n,1}|\right)|c_n|^2
<\infty
\right\}.
\]
Therefore, by the definition of
\(\ell_{\Lambda,N,\sigma}(H^2(\mathbb D))\), a function
\(a:\Lambda\to\mathbb C\) belongs to \(H^2(\mathbb D)|_\Lambda\) if
and only if \eqref{eq:h2-trace-condition-lambda} holds.
\end{proof}

The following estimate for ordinary divided differences is a standard
consequence of the Genocchi--Hermite formula; see
\cite{deBoorDividedDifferences}.

\begin{lem}
\label{lem:divided-difference-estimate}
Let \(k\in\mathbb N^+\), let \(I\subset\mathbb R\) be an interval,
and let \(z_1,\ldots,z_k\in I\) be pairwise distinct. If
\(F\in C^{k-1}(I;\mathbb C)\), then
\[
\left|
D^{k-1}F[z_1,\ldots,z_k]
\right|
\le
\frac{1}{(k-1)!}
\sup_{z\in J}|F^{(k-1)}(z)|,
\]
where
\(
J
:=
\left[
\min_{1\le j\le k}z_j,
\max_{1\le j\le k}z_j
\right].
\)
\end{lem}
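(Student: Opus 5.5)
The estimate is the classical Genocchi--Hermite bound, and I will prove it by induction on \(k\) using the integral representation of divided differences over the simplex. For real nodes this reduces to the mean value theorem for divided differences. Since \(F\) is complex valued, I will apply that theorem to \(\operatorname{Re}F\) and \(\operatorname{Im}F\) only as a heuristic. The rigorous route is the integral formula, which avoids the loss of a factor \(\sqrt2\) that separating real and imaginary parts would produce.

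\textbf{Step 1: the Hermite--Genocchi formula.} I will first establish, for \(F\in C^{k-1}(I;\mathbb C)\) and pairwise distinct \(z_1,\ldots,z_k\in I\),
\[
D^{k-1}F[z_1,\ldots,z_k]
=
\int_{S_{k-1}}
F^{(k-1)}\Bigl(\sum_{j=1}^{k} t_jz_j\Bigr)\,dt,
\]
where \(S_{k-1}=\{(t_1,\ldots,t_k):t_j\ge0,\ \sum_j t_j=1\}\) is parametrized by \((t_2,\ldots,t_k)\) with Lebesgue measure, so that its volume is \(1/(k-1)!\).

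The case \(k=1\) is trivial. For \(k=2\) the formula is the fundamental theorem of calculus,
\[
\frac{F(z_2)-F(z_1)}{z_2-z_1}=\int_0^1F'(z_1+t(z_2-z_1))\,dt.
\]
For the inductive step I will use the recursion in the paper's definition of \(D^k\). The difference of two \((k-1)\)-st divided differences whose node sets differ only in \(z_k\) versus \(z_{k+1}\) is rewritten as an integral over \(S_{k-1}\). In that integral, integrating \(F^{(k)}\) along the last barycentric coordinate produces exactly the factor \(z_{k+1}-z_k\).

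The only point requiring care is that the paper's recursion uses nodes \([z_1,\ldots,z_{k-1},z_{k+1}]\) and \([z_1,\ldots,z_k]\). This agrees with the standard recursion up to symmetry, which the Remark already grants. I therefore rely on symmetry and the standard recursion, so that I need not re-derive the bookkeeping.

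\textbf{Step 2: the estimate.} Every point \(\sum_j t_jz_j\) with \(t\in S_{k-1}\) is a convex combination of the nodes, so it lies in \(J\subset I\). Hence
\[
\bigl|D^{k-1}F[z_1,\ldots,z_k]\bigr|
\le \operatorname{vol}(S_{k-1})\sup_{z\in J}|F^{(k-1)}(z)|
=\frac{1}{(k-1)!}\sup_{z\in J}|F^{(k-1)}(z)|.
\]
Since \(J\) is compact and \(F^{(k-1)}\) is continuous, the supremum is finite.

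The main obstacle is the inductive verification of the simplex formula, specifically matching the integration variable to the recursion. I would handle it by the standard change of variables \(t_k\mapsto\) ``mass split between \(z_k\) and \(z_{k+1}\)'', as in de~Boor's treatment. Alternatively, I could simply cite \cite{deBoorDividedDifferences} for the formula, as the paper does.
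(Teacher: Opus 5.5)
Your proposal is correct and takes the same route as the paper, which gives no argument beyond citing the Hermite--Genocchi formula from de~Boor; your Step~2 (every convex combination of the nodes lies in \(J\), and the simplex has volume \(1/(k-1)!\)) is exactly the intended deduction. Your sketch of the inductive proof of the formula is the standard one: split the mass \(t_k\) between \(z_k\) and \(z_{k+1}\), so that \(z_{k+1}-z_k\) cancels the recursion's denominator and the Jacobian \(t_k\) of the splitting map absorbs the remaining \(t_k\). You are also right that working with the integral, rather than a mean value theorem applied to real and imaginary parts, avoids a spurious factor \(\sqrt2\) for complex-valued \(F\).
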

The next proposition compares pseudo-hyperbolic divided differences
with ordinary divided differences after the hyperbolic change of
variables. It will play a key role in the proof of the power stability
result.

\begin{prop}
\label{prop:pseudo-hyperbolic-ordinary-comparison}
Let \(M\in\mathbb N^+\) and \(L>0\). There exists a constant
\(
C=C(M,L)\geq1
\)
such that the following holds.

Let
\(
u_1,\ldots,u_m>0,
\
1\leq m\leq M,
\)
be pairwise distinct points satisfying
\(
|u_i-u_j|\leq L
\)
for \(1\leq i,j\leq m\). Put
\(
\lambda_i=\tanh u_i,
\)
and denote
\(
u^{(k)}=(u_1,\ldots,u_k),
\
\lambda^{(k)}=(\lambda_1,\ldots,\lambda_k).
\)
Then, for every vector
\(
a=(a_1,\ldots,a_m)\in\mathbb C^m,
\)
we have
\begin{equation}
\label{eq:pseudo-ordinary-comparison}
C^{-1}
\max_{1\leq k\leq m}
\left|
D^{k-1}a[u^{(k)}]
\right|
\leq
\max_{1\leq k\leq m}
\left|
\widetilde D^{k-1}a[\lambda^{(k)}]
\right|
\leq
C
\max_{1\leq k\leq m}
\left|
D^{k-1}a[u^{(k)}]
\right|.
\end{equation}
\end{prop}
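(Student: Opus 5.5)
The plan is to write both families of divided differences as coefficients of Newton-type expansions, and to show that the two coefficient vectors differ by a lower-triangular matrix with entries bounded in terms of $M$ and $L$ only.

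\textbf{Step 1 (the hyperbolic change of variables).} Since $\lambda_i=\tanh u_i\in(0,1)$, the sign factor $|\lambda|/\lambda$ in $b_\lambda$ equals $1$. The addition formula for $\tanh$ gives
\[
b_{\lambda_l}(\lambda_r)=\frac{\lambda_l-\lambda_r}{1-\lambda_l\lambda_r}=\tanh(u_l-u_r)=-(u_r-u_l)\,\varphi(u_r-u_l),
\qquad \varphi(t):=\frac{\tanh t}{t},\ \varphi(0)=1.
\]
The function $\varphi$ is real-analytic and even on $\mathbb R$. On $[-L,L]$ it is bounded below by $\tanh L/L>0$. All of its derivatives are bounded on $\mathbb R$.

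\textbf{Step 2 (two Newton expansions).} Write $c_k:=D^{k-1}a[u^{(k)}]$ and $\widetilde c_k:=\widetilde D^{k-1}a[\lambda^{(k)}]$. Put
\[
g_k(x):=\prod_{l<k}(x-u_l),\qquad \Phi_k(x):=\prod_{l<k}\varphi(x-u_l).
\]
The classical Newton formula gives $a_r=\sum_{k\le r}c_k\,g_k(u_r)$. The pseudo-hyperbolic expansion in the Remark, rewritten with Step 1, gives
\[
a_r=\sum_{k\le r}(-1)^{k-1}\,\widetilde c_k\,(g_k\Phi_k)(u_r).
\]
Apply $D^{j-1}[\,\cdot\,][u^{(j)}]$ to the second expansion; this is a linear operation that returns $c_j$. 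By the Leibniz rule for divided differences,
\[
D^{j-1}[g_k\Phi_k][u^{(j)}]=\sum_{i=1}^{j}D^{i-1}g_k[u^{(i)}]\;D^{j-i}\Phi_k[u_i,\dots,u_j].
\]
Since $g_k$ is monic of degree $k-1$ and vanishes at $u_1,\dots,u_{k-1}$, we have $D^{i-1}g_k[u^{(i)}]=\delta_{ik}$ for $i\le k$. The terms with $i>k$ involve divided differences of order $\ge k$ of a polynomial of degree $k-1$, so they vanish as well. Hence
\[
c_j=\sum_{k\le j}T_{jk}\,\widetilde c_k,\qquad T_{jk}=(-1)^{k-1}D^{j-k}\Phi_k[u_k,\dots,u_j].
\]
In particular $T_{kk}=(-1)^{k-1}$.

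\textbf{Step 3 (uniform bounds).} By Lemma~\ref{lem:divided-difference-estimate},
\[
|T_{jk}|\le \frac{1}{(j-k)!}\sup_{x\in J}|\Phi_k^{(j-k)}(x)|,
\]
where $J$ is the convex hull of $u_k,\dots,u_j$. Expanding $\Phi_k^{(j-k)}$ by the product rule produces at most $M^{M}$ terms, each a product of at most $M$ derivatives of order $\le M$ of translates of $\varphi$. Because all derivatives of $\varphi$ are bounded on $\mathbb R$, this gives $|T_{jk}|\le C_1(M)$. The same bound holds on every interval, so the dependence on $L$ enters only if one prefers to restrict to $[-L,L]$. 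This yields $\max_k|c_k|\le M C_1\max_k|\widetilde c_k|$, which is the left inequality of \eqref{eq:pseudo-ordinary-comparison}.

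For the right inequality, note that $T$ is lower triangular of size $m\le M$, with diagonal entries $\pm1$ and off-diagonal entries bounded by $C_1$. Forward substitution then shows that the entries of $T^{-1}$ are bounded by a constant $C_2(M)$, and therefore $\max_k|\widetilde c_k|\le MC_2\max_k|c_k|$. Taking $C=\max\{1,MC_1,MC_2\}$ completes the argument.

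\textbf{Main obstacle.} I expect the main care to be needed in Step 2. The sign and ordering conventions of $\widetilde D$ have to match the Newton expansion in the Remark exactly; the expansion must be proved by induction on the recursion if it is not taken as given. One must also check the claim $D^{i-1}g_k[u^{(i)}]=\delta_{ik}$ in the Leibniz formula for all $i$, including $i>k$. If the Leibniz route becomes awkward, I would fall back on a direct induction on $k$ through the two recursions, tracking $\widetilde D^{k-1}$ as $\pm D^{k-1}$ plus bounded combinations of lower-order divided differences.
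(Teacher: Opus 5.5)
Your route is the paper's. You relate the two Newton coefficient vectors by a lower-triangular matrix, and your $T_{jk}$ is exactly the paper's $\alpha_{k,j}=D^{j-1}W_k[u^{(j)}]$, because $W_k=(-1)^{k-1}g_k\Phi_k$. Your Leibniz-rule formula $T_{jk}=(-1)^{k-1}D^{j-k}\Phi_k[u_k,\dots,u_j]$ is correct. Your bound $|T_{jk}|\le C_1(M)$, and hence the left inequality, is fine.

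The error is the claim $T_{kk}=(-1)^{k-1}$. Setting $j=k$ in your own formula gives
\[
T_{kk}=(-1)^{k-1}D^{0}\Phi_k[u_k]=(-1)^{k-1}\prod_{l<k}\varphi(u_k-u_l),
\]
and $\varphi(u_k-u_l)\ne 1$ because $u_k\ne u_l$. This is not a harmless slip. With unit diagonal, your forward substitution would give a constant in the right-hand inequality that does not depend on $L$, and that is false. Take $m=2$ and $a=(0,1)$. Then $\max_k|c_k|=1/|u_2-u_1|$ while $\max_k|\widetilde c_k|=1/\tanh|u_2-u_1|$. So the right-hand inequality forces $C\ge |u_2-u_1|/\tanh|u_2-u_1|$, which is unbounded as the separation grows. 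The hypothesis $|u_i-u_j|\le L$ is precisely what keeps the diagonal of $T$ away from $0$. It is not optional, contrary to the impression your final constant $C=\max\{1,MC_1,MC_2(M)\}$ gives.

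The repair uses the estimate you already recorded in Step~1. Since $|u_k-u_l|\le L$ and $\tanh L/L\le\varphi\le 1$ on $[-L,L]$, you get $(\tanh L/L)^{M-1}\le |T_{kk}|\le 1$. Forward substitution with this diagonal lower bound and the off-diagonal bound $C_1(M)$ then bounds the entries of $T^{-1}$ by some $C_2(M,L)$, which gives the right inequality.

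With this correction your proof coincides with the paper's. There, $\alpha_{k,k}=\prod_{\ell<k}\gamma_\ell(u_k)$ is squeezed between constants $c_0,C_0$ depending on $M$ and $L$, and the triangular system is inverted through the recursively defined $\kappa_r$. The only genuine difference is how the off-diagonal entries are controlled. You use the Leibniz rule for divided differences, which makes the entries explicit. The paper instead bounds $D^{r-1}W_k[u^{(r)}]$ directly from uniform derivative bounds on $W_k$.
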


\begin{proof}
For \(1\leq k\leq m\), define the ordinary Newton basis
\(
N_k(u)
:=
\prod_{\ell=1}^{k-1}(u-u_\ell)
\)
and the pseudo-hyperbolic Newton basis
\(
W_k(u)
:=
\prod_{\ell=1}^{k-1}
b_{\lambda_\ell}(\tanh u),
\)
where an empty product is understood to be equal to \(1\).

Since
\(
\lambda_\ell=\tanh u_\ell>0,
\)
we have
\[
b_{\lambda_\ell}(\tanh u)
=
\frac{\tanh u_\ell-\tanh u}
{1-\tanh u_\ell\tanh u}
=
\tanh(u_\ell-u).
\]
Hence
\(
W_k(u)
=
N_k(u)
\prod_{\ell=1}^{k-1}\gamma_\ell(u),
\)
where
\[
\gamma_\ell(u)
:=
\begin{cases}
\dfrac{\tanh(u_\ell-u)}{u-u_\ell},
& u\neq u_\ell,\\[1.2ex]
-1,
& u=u_\ell.
\end{cases}
\]

Let
\(
I
:=
\left[
\min_{1\leq i\leq m}u_i,
\max_{1\leq i\leq m}u_i
\right].
\)
Since
\(
\operatorname{diam}(I)\leq L,
\)
we have
\(
|u-u_\ell|\leq L,
\
u\in I.
\)
The function
\(
s\mapsto\frac{\tanh s}{s}
\)
extends continuously to a positive smooth function at \(s=0\).
Therefore, there exist constants
\(
0<c_L\leq C_L<\infty,
\)
depending only on \(L\), such that
\begin{equation}
\label{eq:gamma-uniform-bounds}
c_L
\leq
|\gamma_\ell(u)|
\leq
C_L,
\qquad
u\in I.
\end{equation}

For \(1\leq k,r\leq m\), define
\(
\alpha_{k,r}
:=
D^{r-1}W_k[u^{(r)}].
\)
Since
\(
W_k(u_j)=0,
\
1\leq j<k,
\)
we have
\(
\alpha_{k,r}=0,
\
1\leq r<k.
\)
The ordinary Newton interpolation formula therefore gives
\begin{equation}
\label{eq:Wk-expansion}
W_k(u_i)
=
\sum_{r=k}^{i}
\alpha_{k,r}N_r(u_i),
\qquad
1\leq i\leq m,
\end{equation}
where the sum is understood to be zero when \(i<k\).

For the diagonal coefficients, since
\(W_k(u_1)=\cdots=W_k(u_{k-1})=0\), we obtain
\[
\alpha_{k,k}
=
D^{k-1}W_k[u^{(k)}]
=
\frac{W_k(u_k)}{N_k(u_k)}
=
\prod_{\ell=1}^{k-1}\gamma_\ell(u_k).
\]
By \eqref{eq:gamma-uniform-bounds} and \(k\leq m\leq M\), there
exist constants
\(
0<c_0\leq C_0<\infty,
\)
depending only on \(M\) and \(L\), such that
\begin{equation}
\label{eq:alpha-diagonal}
c_0
\leq
|\alpha_{k,k}|
\leq
C_0,
\qquad
1\leq k\leq m.
\end{equation}

Each function
\(
u\mapsto\tanh(u_\ell-u)
\)
and all its derivatives are uniformly bounded on \(\mathbb R\),
independently of \(u_\ell\). Since \(W_k\) is a product of at most
\(M-1\) such functions, the Leibniz rule shows that there exists a
constant \(K_M<\infty\), depending only on \(M\), such that
\[
\sup_{u\in I}
\left|
W_k^{(j)}(u)
\right|
\leq
K_M,
\qquad
0\leq j\leq M-1,
\quad
1\leq k\leq m.
\]
Hence Lemma~\ref{lem:divided-difference-estimate} yields
\begin{equation}
\label{eq:alpha-bound}
|\alpha_{k,r}|
=
\left|
D^{r-1}W_k[u^{(r)}]
\right|
\leq
\frac{1}{(r-1)!}
\sup_{u\in I}
\left|
W_k^{(r-1)}(u)
\right|
\leq
K_M
\end{equation}
for \(1\leq k\leq r\leq m\).
Now define
\(
A_r
:=
D^{r-1}a[u^{(r)}],
\
1\leq r\leq m,
\)
and
\(
B_k
:=
\widetilde D^{k-1}a[\lambda^{(k)}],
\
1\leq k\leq m.
\)
The ordinary Newton interpolation formula gives
\(
a_i
=
\sum_{r=1}^{i}
A_rN_r(u_i),
\
1\leq i\leq m,
\)
whereas the pseudo-hyperbolic Newton interpolation formula gives
\(
a_i
=
\sum_{k=1}^{i}
B_kW_k(u_i),
\
1\leq i\leq m.
\)
Using \eqref{eq:Wk-expansion}, we obtain
\(
a_i
=
\sum_{r=1}^{i}
\left(
\sum_{k=1}^{r}
\alpha_{k,r}B_k
\right)
N_r(u_i).
\)
By the uniqueness of the ordinary Newton coefficients,
\begin{equation}
\label{eq:triangular-relation}
A_r
=
\sum_{k=1}^{r}
\alpha_{k,r}B_k,
\qquad
1\leq r\leq m.
\end{equation}

It follows directly from \eqref{eq:alpha-bound} that
\(
|A_r|
\leq
rK_M
\max_{1\leq k\leq r}|B_k|
\leq
MK_M
\max_{1\leq k\leq m}|B_k|.
\)
Consequently,
\begin{equation}
\label{eq:A-by-B}
\max_{1\leq r\leq m}|A_r|
\leq
MK_M
\max_{1\leq k\leq m}|B_k|.
\end{equation}

Conversely, solving \eqref{eq:triangular-relation} recursively gives
\(
B_r
=
\alpha_{r,r}^{-1}
\left(
A_r-
\sum_{k=1}^{r-1}
\alpha_{k,r}B_k
\right).
\)
Define
\(
\kappa_1:=c_0^{-1}
\)
and, for \(2\leq r\leq M\), define recursively
\(
\kappa_r
:=
\max
\left\{
\kappa_{r-1},
\,
c_0^{-1}
\left(
1+(r-1)K_M\kappa_{r-1}
\right)
\right\}.
\)
We claim that
\begin{equation}
\label{eq:recursive-B-bound}
\max_{1\leq k\leq r}|B_k|
\leq
\kappa_r
\max_{1\leq j\leq r}|A_j|,
\qquad
1\leq r\leq m.
\end{equation}
For \(r=1\), this follows from
\(
A_1=\alpha_{1,1}B_1
\)
and \eqref{eq:alpha-diagonal}. Suppose that
\eqref{eq:recursive-B-bound} holds with \(r-1\) in place of \(r\).
Using \eqref{eq:alpha-diagonal}, \eqref{eq:alpha-bound}, and the
induction hypothesis, we obtain
\[
|B_r|
\leq
c_0^{-1}
\left(
|A_r|
+
K_M
\sum_{k=1}^{r-1}|B_k|
\right)
\leq
c_0^{-1}
\left(
1+(r-1)K_M\kappa_{r-1}
\right)
\max_{1\leq j\leq r}|A_j|
\leq
\kappa_r
\max_{1\leq j\leq r}|A_j|.
\]
Together with the induction hypothesis, this proves
\eqref{eq:recursive-B-bound}.

Since \(m\leq M\), we conclude that
\begin{equation}
\label{eq:B-by-A}
\max_{1\leq k\leq m}|B_k|
\leq
\kappa_M
\max_{1\leq r\leq m}|A_r|.
\end{equation}

Finally, taking
\(
C
:=
\max
\left\{
1,
MK_M,
\kappa_M
\right\},
\)
the estimates \eqref{eq:A-by-B} and \eqref{eq:B-by-A} yield
\eqref{eq:pseudo-ordinary-comparison}.
\end{proof}

We now prove that, for arbitrary data indexed by \(\Lambda\), the
summability condition in Lemma~\ref{lem:h2-finite-trace} is equivalent
to the corresponding summability condition for the same coordinate
data indexed by \(\Lambda^p\).

\begin{thm}
\label{thm:h2-trace-power-stability}
Let
\(
\Lambda=\{\lambda_j\}_{j\in\mathbb N}\subset[0,1)
\)
be a finite union of Carleson sequences, and let \(p>0\). Enumerate
\(
\Lambda^p=\{\lambda_j^p\}_{j\in\mathbb N}
\)
according to the same index set. Then
\(
H^2(\Lambda)=H^2(\Lambda^p).
\)
\end{thm}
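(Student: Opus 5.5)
The plan is to reduce the identity $H^2(\Lambda)=H^2(\Lambda^p)$ to the summability criterion of Lemma~\ref{lem:h2-finite-trace}, applied simultaneously to $\Lambda$ and to $\Lambda^p$. This is possible because Theorem~\ref{thm:simultaneous-cluster-decomposition} gives a single index clustering $\{\sigma_n\}$ that is admissible for both. We then compare, cluster by cluster, the two weighted maxima of pseudo-hyperbolic divided differences. The data $a$ are the same and only the nodes change.

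\textbf{Step 1: reduction.} We first discard finitely many points. $\Lambda$ is a finite union of Carleson sequences, so it has no accumulation point in $\mathbb D$. Hence only finitely many $\lambda_j$ lie in $[0,a]$ for any fixed $a<1$, and this includes a possible point $\lambda_j=0$, which is fixed by the power map. Finitely supported sequences belong to both trace spaces: for a point $\lambda_{j_0}$, multiply the Blaschke product of the remaining points by a suitable constant. The remaining points still form a Blaschke sequence, since $\Lambda$ is a finite union of Carleson sequences, and this Blaschke product is nonzero at $\lambda_{j_0}$ because the points are distinct. So both trace spaces are stable under changing finitely many coordinates. We may therefore assume $\Lambda\subset[a,1)$ with $a>0$, so that $\Lambda^p\subset[a^p,1)$.

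\textbf{Step 2: hyperbolic coordinates.} Write $\lambda=\tanh u$ and $\lambda^p=\tanh v$, so that $v=\varphi(u):=\operatorname{artanh}\bigl((\tanh u)^p\bigr)$ with $u\ge u_0:=\operatorname{artanh}a>0$. Take the decomposition of Theorem~\ref{thm:simultaneous-cluster-decomposition}. Within each cluster, both $\rho(\lambda,\mu)<1/3$ and $\rho(\lambda^p,\mu^p)<1/3$. Since $\rho(\tanh u,\tanh u')=\tanh|u-u'|$, the $u$-nodes and the $v$-nodes of a cluster all lie within $L:=\operatorname{artanh}(1/3)$ of each other. Proposition~\ref{prop:pseudo-hyperbolic-ordinary-comparison} then shows the following, with a constant depending only on $N$ and $L$:
\begin{itemize}
\item the max of the $\widetilde D$'s over $\lambda_{n,1},\dots,\lambda_{n,k}$ is comparable to the max of the ordinary divided differences $D^{k-1}a[u^{(k)}]$;
\item likewise for the $\lambda^p$-nodes with $D^{k-1}a[v^{(k)}]$.
\end{itemize}
The weights are handled by \eqref{eq:power-boundary-comparison}: it gives $1-\lambda_{n,1}^p\asymp 1-\lambda_{n,1}$ uniformly. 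So it remains to show that $\max_k|D^{k-1}a[u^{(k)}]|\asymp\max_k|D^{k-1}a[v^{(k)}]|$ uniformly over clusters, for nodes $v_i=\varphi(u_i)$.

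\textbf{Step 3: the node change $u\mapsto v$.} This repeats the triangular argument in the proof of Proposition~\ref{prop:pseudo-hyperbolic-ordinary-comparison}, with $\tanh(u_\ell-u)$ replaced by $\varphi(u)-\varphi(u_\ell)$. Set $\widehat N_k(u):=\prod_{\ell<k}(\varphi(u)-\varphi(u_\ell))$ and expand it in the ordinary Newton basis $N_r$ of the $u$-nodes.
\begin{itemize}
\item The diagonal coefficient is $\prod_{\ell<k}\frac{\varphi(u_k)-\varphi(u_\ell)}{u_k-u_\ell}$. It is bounded above and below once $0<c\le\varphi'\le C$ on $[u_0,\infty)$.
\item The off-diagonal coefficients are divided differences of $\widehat N_k$. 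By Lemma~\ref{lem:divided-difference-estimate} and the Leibniz rule, they are bounded once $\varphi^{(j)}$ is bounded on $[u_0,\infty)$ for $1\le j\le N-1$. Here $|\varphi(u)-\varphi(u_\ell)|\le CL$ on the cluster interval.
\end{itemize}
Inverting the unitriangular-type system exactly as there gives the two-sided comparison. By symmetry, the argument with $1/p$ also works in reverse.

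The main obstacle is verifying these uniform bounds on $\varphi'$ and $\varphi^{(j)}$ on the unbounded interval $[u_0,\infty)$. I would write $\varphi(u)=\frac12\log\frac{1+t^p}{1-t^p}$ with $t=\tanh u$. Then $1-t=\frac{2e^{-2u}}{1+e^{-2u}}$, and $1-t^p=(1-t)\,g(t)$ with $g(t)=\frac{1-t^p}{1-t}$ smooth and positive on $[a,1]$, with $g(1)=p$. Hence $\varphi(u)=u+\frac12\log\frac{1+t^p}{p(1+t)}+h(e^{-2u})$, where $h$ is smooth near $0$. The correction terms are smooth functions of $e^{-2u}\in(0,e^{-2u_0}]$, and each $u$-derivative brings a factor $e^{-2u}$. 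So all their derivatives are bounded, and $\varphi'\to1$. Positivity of $\varphi'$ on the compact part, together with this limit, gives the lower bound. With Step~3 established, Lemma~\ref{lem:h2-finite-trace} shows that a coordinate sequence satisfies \eqref{eq:h2-trace-condition-lambda} for $\Lambda$ if and only if it does for $\Lambda^p$, which proves the theorem.
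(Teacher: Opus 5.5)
Your proposal is correct and follows the paper's proof essentially step by step: a common cluster decomposition from Theorem~\ref{thm:simultaneous-cluster-decomposition}, hyperbolic coordinates with Proposition~\ref{prop:pseudo-hyperbolic-ordinary-comparison} applied to both node sets, a triangular comparison of ordinary divided differences under $v=\varphi(u)$ with derivative bounds obtained through $x=e^{-2u}$, the weight comparison $1-\lambda^p\asymp 1-\lambda$, and finally Lemma~\ref{lem:h2-finite-trace}. The differences are minor: you handle $0\in\Lambda$ by showing that both trace spaces are stable under finite modifications, where the paper uses the splitting $H^2(\Lambda)=\mathbb C\oplus H^2(\Gamma)$; and you expand $\widehat N_k=\prod_{\ell<k}(\varphi-\varphi(u_\ell))$ in the $u$-Newton basis, which needs derivative bounds only for $\varphi$, whereas the paper expands $N_k\circ\Phi_p^{-1}$ in the $v$-Newton basis and must also bound the derivatives of $\Phi_p^{-1}$.
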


\begin{proof}
We first assume that \(0\notin\Lambda\). Since \(\Lambda\) is a
finite union of Carleson sequences contained in \((0,1)\), it is
bounded away from \(0\). Thus there exists \(\lambda_*>0\) such that
\(
\lambda_*\leq\lambda<1\) for \(\lambda\in\Lambda.\)

Fix \(0<\eta<1\). By Theorem~\ref{thm:simultaneous-cluster-decomposition}, there exists a common
finite-cluster decomposition
\(
\Lambda
=
\mathop{\dot\bigcup}_{n\in\mathbb N^+}\sigma_n,
\
\sigma_n
=
\{\lambda_{n,1},\ldots,\lambda_{n,m_n}\}\) for \(m_n\leq N,\)
such that, upon setting
\(
\sigma_n^p
:=
\{\lambda_{n,1}^p,\ldots,\lambda_{n,m_n}^p\},
\)
both decompositions
\(
\Lambda
=
\mathop{\dot\bigcup}_{n\in\mathbb N^+}\sigma_n\) and
\(\Lambda^p
=
\mathop{\dot\bigcup}_{n\in\mathbb N^+}\sigma_n^p
\)
satisfy the hypotheses of Lemma~\ref{lem:h2-finite-trace}. Moreover,
for distinct points in the same cluster,
\(
\rho(\lambda_{n,i},\lambda_{n,j})<\eta\) and
\(\rho(\lambda_{n,i}^p,\lambda_{n,j}^p)<\eta.
\)

For \(n\in\mathbb N^+\) and \(1\leq k\leq m_n\), put
\(
u_{n,k}:=\operatorname{arctanh}\lambda_{n,k},
\
v_{n,k}:=\operatorname{arctanh}(\lambda_{n,k}^p).
\)
Since
\(
\rho(\tanh u,\tanh v)=|\tanh(u-v)|,
\ u,v>0,
\)
we have
\begin{equation}
\label{eq:trace-power-cluster-diameter}
|u_{n,i}-u_{n,j}|\leq L,
\qquad
|v_{n,i}-v_{n,j}|\leq L,
\end{equation}
where
\(
L:=\operatorname{arctanh}\eta.
\)

Define
\(
\Phi_p(u)
:=
\operatorname{arctanh}\bigl((\tanh u)^p\bigr),
\ u>0.
\)
Then \(v_{n,k}=\Phi_p(u_{n,k})\). Set
\(
u_*:=\operatorname{arctanh}\lambda_*>0.
\)
The map \(\Phi_p\) is an increasing smooth diffeomorphism of
\((0,\infty)\) onto itself. Moreover, there exist constants
\(
0<c_\Phi\leq C_\Phi<\infty
\)
such that
\begin{equation}
\label{eq:trace-power-Phi-derivative-bounds}
c_\Phi
\leq
\Phi_p'(u)
\leq
C_\Phi,
\qquad u\geq u_*,
\end{equation}
and all derivatives of \(\Phi_p\) and \(\Phi_p^{-1}\) up to order
\(N\) are uniformly bounded on the corresponding half-lines.

Indeed, if \(x=e^{-2u}\), then
\(
e^{-2\Phi_p(u)}=R_p(x),
\)
where
\(
R_p(x)
:=
\frac{(1+x)^p-(1-x)^p}
     {(1+x)^p+(1-x)^p}.
\)
Put
\(
q_*:=e^{-2u_*}<1\) and
\(r_p(x):=\frac{R_p(x)}{x}\) for \(x>0.\)
The function \(r_p\) extends to a strictly positive
\(C^\infty\)-function on \([0,q_*]\), with \(r_p(0)=p\).
Consequently,
\(
\Phi_p(u)
=
u-\frac12\log r_p(e^{-2u}),
\ u\geq u_*.
\)
Since \(r_p\) is strictly positive on the compact interval
\([0,q_*]\), the functions \(r_p\), \(1/r_p\), and all their
derivatives are bounded there. Repeated application of the chain rule
therefore shows that
\(
\sup_{u\geq u_*}
\left|\Phi_p^{(m)}(u)\right|
<\infty,
\ 1\leq m\leq N.
\)

Furthermore,
\(
\Phi_p'(u)
=
\frac{
p(\tanh u)^{p-1}\operatorname{sech}^2u
}{
1-(\tanh u)^{2p}
}
>0
\)
and
\(
\Phi_p'(u)\to1
\)
as \(u\to\infty\). Continuity and positivity therefore give
\eqref{eq:trace-power-Phi-derivative-bounds}.

Finally,
\(
(\Phi_p^{-1})'(v)
=
\frac{1}{
\Phi_p'\bigl(\Phi_p^{-1}(v)\bigr)
}.
\)
Repeated differentiation expresses every derivative of
\(\Phi_p^{-1}\) of order at most \(N\) as a finite sum of products of
derivatives of \(\Phi_p\), divided by powers of \(\Phi_p'\).
The preceding derivative bounds and the lower bound in
\eqref{eq:trace-power-Phi-derivative-bounds} therefore imply
\[
\sup_{v\geq\Phi_p(u_*)}
\left|(\Phi_p^{-1})^{(m)}(v)\right|
<\infty,
\qquad 1\leq m\leq N.
\]

We next compare the ordinary divided differences before and after the
change of variables \(v=\Phi_p(u)\). We claim that there exists a
constant \(C_0\geq1\), independent of \(n\) and of the data, such
that, for every \(a_1,\ldots,a_{m_n}\in\mathbb C\),
\begin{align}
&C_0^{-1}
\max_{1\leq r\leq m_n}
\left|
D^{r-1}a[u_{n,1},\ldots,u_{n,r}]
\right|
\nonumber\\
&\qquad\leq
\max_{1\leq r\leq m_n}
\left|
D^{r-1}a[v_{n,1},\ldots,v_{n,r}]
\right|
\nonumber\\
&\qquad\leq
C_0
\max_{1\leq r\leq m_n}
\left|
D^{r-1}a[u_{n,1},\ldots,u_{n,r}]
\right|.
\label{eq:trace-power-ordinary-comparison}
\end{align}

To prove the claim, fix a cluster, suppress the index \(n\), and put
\(
m:=m_n,
\
u_k:=u_{n,k},
\
v_k:=v_{n,k}.
\)

Let
\(
A_k:=D^{k-1}a[u_1,\ldots,u_k],
\
B_k:=D^{k-1}a[v_1,\ldots,v_k].
\)
Define the Newton basis polynomials
\(
N_k(u):=\prod_{\ell=1}^{k-1}(u-u_\ell),
\
M_k(v):=\prod_{\ell=1}^{k-1}(v-v_\ell),
\)
and let \(G_p:=\Phi_p^{-1}\). For \(1\leq k\leq m\), set
\(
R_k(v):=N_k(G_p(v)).
\)
Since \(R_k(v_j)=0\) for \(j<k\), its Newton expansion at
\(v_1,\ldots,v_m\) has the form
\(
R_k(v_i)
=
\sum_{r=k}^{i}\beta_{k,r}M_r(v_i),
\
1\leq i\leq m,
\)
where
\(
\beta_{k,r}
:=
D^{r-1}R_k[v_1,\ldots,v_r].
\)
For the diagonal coefficients,
\[
\beta_{k,k}
=
\frac{N_k(u_k)}{M_k(v_k)}
=
\prod_{\ell=1}^{k-1}
\frac{u_k-u_\ell}
     {\Phi_p(u_k)-\Phi_p(u_\ell)}.
\]
By the mean value theorem and
\eqref{eq:trace-power-Phi-derivative-bounds}, there exist constants
\(0<d_0\leq D_0<\infty\), independent of the cluster, such that
\begin{equation}
\label{eq:trace-power-beta-diagonal}
d_0\leq|\beta_{k,k}|\leq D_0,
\qquad 1\leq k\leq m.
\end{equation}

Let
\(
J_v
:=
[\min_{1\leq j\leq m}v_j,
 \max_{1\leq j\leq m}v_j].
\)
For \(v\in J_v\), one has \(G_p(v)\in J_u\), where
\(
J_u
:=
[\min_{1\leq j\leq m}u_j,
 \max_{1\leq j\leq m}u_j].
\)
Thus, by \eqref{eq:trace-power-cluster-diameter},
\(
|G_p(v)-u_\ell|\leq L\) for \(v\in J_v,
\
1\leq\ell\leq m.\)
The uniform derivative bounds for \(G_p\), together with the Leibniz
rule, imply that there exists \(K<\infty\), independent of the
cluster, such that
\(
\sup_{v\in J_v}|R_k^{(r)}(v)|\leq K\) for \(0\leq r\leq N-1,
\
1\leq k\leq m.\)
Lemma~\ref{lem:divided-difference-estimate} therefore gives
\begin{equation}
\label{eq:trace-power-beta-bound}
|\beta_{k,r}|\leq K,
\qquad
1\leq k\leq r\leq m.
\end{equation}

The ordinary Newton interpolation formula yields
\(
a_i
=
\sum_{k=1}^{i}A_kN_k(u_i)
=
\sum_{k=1}^{i}A_kR_k(v_i).
\)
Using the Newton expansions of the functions \(R_k\), we obtain
\(
a_i
=
\sum_{r=1}^{i}
(
\sum_{k=1}^{r}\beta_{k,r}A_k
)
M_r(v_i).
\)
By uniqueness of the Newton coefficients,
\begin{equation}
\label{eq:trace-power-triangular-relation}
B_r
=
\sum_{k=1}^{r}\beta_{k,r}A_k,
\qquad
1\leq r\leq m.
\end{equation}
By \eqref{eq:trace-power-beta-bound},
\(
\max_{1\leq r\leq m}|B_r|
\leq
NK
\max_{1\leq r\leq m}|A_r|.
\)
Conversely, the triangular system
\eqref{eq:trace-power-triangular-relation}, together with
\eqref{eq:trace-power-beta-diagonal} and
\eqref{eq:trace-power-beta-bound}, can be solved recursively. Since
\(m\leq N\), this gives a constant \(K'\), independent of the
cluster, such that
\(
\max_{1\leq r\leq m}|A_r|
\leq
K'
\max_{1\leq r\leq m}|B_r|.
\)
This proves \eqref{eq:trace-power-ordinary-comparison}.

Let
\(
a=\{a_{n,k}:n\in\mathbb N^+,\ 1\leq k\leq m_n\}
\)
be arbitrary data indexed by \(\Lambda\), and regard the same data as
indexed by \(\Lambda^p\) through
\(\lambda_{n,k}\leftrightarrow\lambda_{n,k}^p\). Define
\[
\Delta_n(a)
:=
\max_{1\leq k\leq m_n}
\left|
\widetilde D^{\,k-1}a
[\lambda_{n,1},\ldots,\lambda_{n,k}]
\right|
\]
and
\[
\Delta_n^{(p)}(a)
:=
\max_{1\leq k\leq m_n}
\left|
\widetilde D^{\,k-1}a
[\lambda_{n,1}^p,\ldots,\lambda_{n,k}^p]
\right|.
\]
Applying Proposition~\ref{prop:pseudo-hyperbolic-ordinary-comparison}
to the nodes \(u_{n,k}\) and \(v_{n,k}\), and then using
\eqref{eq:trace-power-ordinary-comparison}, we obtain a constant
\(C_1\geq1\), independent of \(n\) and \(a\), such that
\begin{equation}
\label{eq:trace-power-pseudo-comparison}
C_1^{-1}\Delta_n(a)
\leq
\Delta_n^{(p)}(a)
\leq
C_1\Delta_n(a).
\end{equation}

Moreover, for every \(0\leq t<1\),
\(
\min\{1,p\}
\leq
\frac{1-t^p}{1-t}
\leq
\max\{1,p\}.
\)
Hence
\begin{equation}
\label{eq:trace-power-weight-comparison}
1-\lambda_{n,1}^p
\asymp
1-\lambda_{n,1},
\end{equation}
with constants independent of \(n\).

Combining
\eqref{eq:trace-power-pseudo-comparison} and
\eqref{eq:trace-power-weight-comparison}, we obtain
\(
\sum_{n\in\mathbb N^+}
(1-\lambda_{n,1})\Delta_n(a)^2
<\infty
\)
if and only if
\(
\sum_{n\in\mathbb N^+}
(1-\lambda_{n,1}^p)\Delta_n^{(p)}(a)^2
<\infty.
\)
Lemma~\ref{lem:h2-finite-trace}, applied to the two compatible
cluster decompositions, now yields
\(
a\in H^2(\Lambda)
\Longleftrightarrow
a\in H^2(\Lambda^p).
\)
This proves the assertion when \(0\notin\Lambda\).

Suppose now that \(0\in\Lambda\), and set
\(
\Gamma:=\Lambda\setminus\{0\}.
\)
If \(\Gamma=\varnothing\), the conclusion is immediate. Otherwise,
\(\Gamma\subset(0,1)\) is a finite union of Carleson sequences, and
the first part of the proof gives
\(
H^2(\Gamma)=H^2(\Gamma^p).
\)
Since \(\Gamma\) is a Blaschke sequence, let \(B_\Gamma\) be the
Blaschke product with zero set \(\Gamma\). As \(0\notin\Gamma\),
\(
B_\Gamma(0)\neq0.
\)
Under the natural coordinate decomposition corresponding to
\(\Lambda=\{0\}\cup\Gamma\), we have
\(
H^2(\Lambda)
=
\mathbb C\oplus H^2(\Gamma).
\)
Indeed, if \(c\in H^2(\Gamma)\) and \(a_0\in\mathbb C\), choose
\(f\in H^2(\mathbb D)\) whose trace on \(\Gamma\) is \(c\), and set
\(
g
:=
f+
\frac{a_0-f(0)}{B_\Gamma(0)}B_\Gamma.
\)
Then \(g(0)=a_0\) and \(g|_\Gamma=f|_\Gamma\). The same argument gives
\(
H^2(\Lambda^p)
=
\mathbb C\oplus H^2(\Gamma^p).
\)
Therefore,
\(
H^2(\Lambda)
=
\mathbb C\oplus H^2(\Gamma)
=
\mathbb C\oplus H^2(\Gamma^p)
=
H^2(\Lambda^p).
\)
The proof is complete.
\end{proof}

\section{Power Stability of Finitely Generated Dynamical Frames for Positive Operators}
We first transfer the trace-space identity from Section~3 to the
weighted evaluation operators and prove
\(
\operatorname{Ran}(T_\Lambda)
=
\operatorname{Ran}(T_{\Lambda^p}).
\)
Combined with the finite-generator characterization of dynamical frames,
this yields the power-stability theorem for positive operators.

To state the next characterization theorem, we introduce the weighted
evaluation operator associated with a sequence in the unit disk. Let
\(
\Lambda=\{\lambda_j\}_{j\in\mathbb N}\subset\mathbb D
\)
and set
\(
\varepsilon_j
:=
\sqrt{1-|\lambda_j|^2},
\
j\in\mathbb N.
\)
Define
\[
T_\Lambda:
D(T_\Lambda)\subset H^2(\mathbb D)
\longrightarrow
\ell^2(\mathbb N)
\]
by
\(
T_\Lambda\varphi
:=
\left\{
\varepsilon_j\varphi(\lambda_j)
\right\}_{j\in\mathbb N},
\)
where
\[
D(T_\Lambda)
:=
\left\{
\varphi\in H^2(\mathbb D):
\left\{
\varepsilon_j\varphi(\lambda_j)
\right\}_{j\in\mathbb N}
\in\ell^2(\mathbb N)
\right\}.
\]

For a finite index set \(I\), set
\(
H_I^2
:=
\bigoplus_{i\in I}H^2(\mathbb D)
\)
and define
\[
T_{\Lambda,I}
:=
\bigoplus_{i\in I}T_\Lambda:
D(T_{\Lambda,I})
\subset H_I^2
\longrightarrow
\ell^2(I\times\mathbb N),
\]
where
\(
D(T_{\Lambda,I})
=
\bigoplus_{i\in I}D(T_\Lambda).
\)
Thus, for
\(
\boldsymbol{\varphi}
=
(\varphi_i)_{i\in I}
\in D(T_{\Lambda,I}),
\)
\(
T_{\Lambda,I}\boldsymbol{\varphi}
=
\left\{
\varepsilon_j\varphi_i(\lambda_j)
\right\}_{(i,j)\in I\times\mathbb N}.
\)

If \(E=\{e_\gamma\}_{\gamma\in\Gamma}\) is a Bessel sequence in \(H\),
we denote its analysis operator by
\[
C_E:H\longrightarrow\ell^2(\Gamma),
\qquad
C_Eh
=
\{\langle h,e_\gamma\rangle\}_{\gamma\in\Gamma}.
\]
 For a finite index set \(I\), we consider dynamical frames generated
by the operator orbits of \(\{f_i\}_{i\in I}\), namely, systems of
the form
\(
\{A^nf_i\}_{n\in\mathbb N,\,i\in I}.
\)

The following characterization lemma for such finitely generated
dynamical frames is due to Cabrelli, Molter, Paternostro, and
Philipp \cite[Theorem~2.3]{CCabrelliUMolter}.

\begin{lem}
\label{ii}
Let \(I\) be a finite index set, let \(A\in\mathcal B(H)\) be a
normal operator, and let \(\{f_i\}_{i\in I}\subset H\). Then the
system
\(
\mathcal A
=
\{A^nf_i\}_{n\in\mathbb N,\,i\in I}
\)
is a frame for \(H\) if and only if the following conditions hold:
\begin{enumerate}[\rm (i)]
\item
The operator \(A\) admits the diagonal normal form
\(
A
=
\sum_{j=0}^{\infty}\lambda_jP_j,
\)
where the series converges in the strong operator topology,
\(
\lambda_j\neq\lambda_k
\) for \(j\neq k\),
\(
\sum_{j=0}^{\infty}P_j=I_H,
\)
and
\(
P_jH=\ker(A-\lambda_jI_H).
\)
Moreover,
\(
\operatorname{mult}(A)
=
\sup_{j\in\mathbb N}\dim P_jH
\le |I|.
\)

\item
The sequence of distinct eigenvalues
\(
\Lambda=\{\lambda_j\}_{j\in\mathbb N}
\subset\mathbb D
\)
is the union of at most \(|I|\) Carleson sequences.

\item
There exist constants \(\alpha,\beta>0\) such that, for every
\(j\in\mathbb N\) and every \(h\in P_jH\),
\[
\alpha(1-|\lambda_j|^2)\|h\|^2
\le
\sum_{i\in I}
|\langle h,P_jf_i\rangle|^2
\le
\beta(1-|\lambda_j|^2)\|h\|^2.
\]

\item
\(
\operatorname{Ran}(T_{\Lambda,I})
+
\ker(C_E^*)
=
\ell^2(I\times\mathbb N),
\)
where
\(
E
=
\left\{
(1-|\lambda_j|^2)^{-1/2}P_jf_i
\right\}_{j\in\mathbb N,\,i\in I}.
\)
\end{enumerate}
\end{lem}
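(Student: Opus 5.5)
The characterization goes through the spectral theorem and a Hardy-space model of the analysis operator. Everything reduces to the factorization $\mathcal A$-synthesis $=C_E^*\,T_{\Lambda,I}$ (up to the unitary twist $\varphi\mapsto\overline{\varphi(\bar z)}$, harmless here). The two directions then follow from the range/kernel decomposition of this factorization.

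\emph{Step 1: the model.} Assume first that $A=\sum_j\lambda_jP_j$ is diagonal with $|\lambda_j|<1$. For $x\in H$ and $i\in I$ we have
\[
\langle x,A^nf_i\rangle=\sum_j\bar\lambda_j^{\,n}\langle P_jx,P_jf_i\rangle .
\]
Writing $k_\lambda(z)=(1-\bar\lambda z)^{-1}$ for the Szegő kernel, this gives
\[
\sum_{n}|\langle x,A^nf_i\rangle|^2=\Bigl\|\sum_j\langle P_jx,P_jf_i\rangle\,k_{\lambda_j}\Bigr\|_{H^2}^2 .
\]
So the analysis operator of $\mathcal A$ is unitarily the map $x\mapsto(\sum_j\langle x,P_jf_i\rangle k_{\lambda_j})_{i\in I}\in H^2_I$. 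Its formal adjoint sends $\boldsymbol\varphi=(\varphi_i)$ to $\sum_{i,j}\varphi_i(\lambda_j)P_jf_i$. Inserting the weights $\varepsilon_j$, this is exactly $C_E^*T_{\Lambda,I}\boldsymbol\varphi$, with $E$ as in (iv). Hence $\mathcal A$ is a frame iff $C_E^*T_{\Lambda,I}$ extends to a bounded operator $H^2_I\to H$ that is onto.

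\emph{Step 2: sufficiency.} By orthogonality of the $P_jH$, condition (iii) says that for each $j$ the family $\{\varepsilon_j^{-1}P_jf_i\}_i$ is a frame for $P_jH$ with uniform bounds. Hence $E$ is a frame for $H$, and $C_E^*$ is bounded and onto. Condition (ii) (Carleson embedding for finite unions of Carleson sequences) makes $T_\Lambda$ bounded on all of $H^2$. Then (iv) gives
\[
C_E^*\bigl(\operatorname{Ran}T_{\Lambda,I}\bigr)=C_E^*\bigl(\operatorname{Ran}T_{\Lambda,I}+\ker C_E^*\bigr)=C_E^*\ell^2=H,
\]
which is the frame property.

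\emph{Step 3: necessity.} Assume $\mathcal A$ is a frame. First I would show that $A$ has no continuous spectral part and that $\sigma_p(A)\subset\mathbb D$. The Bessel bound forces $\sum_n|\langle x,A^nf_i\rangle|^2<\infty$. Via the spectral measure this is $\bigl\|\int k_\lambda\,d\mu_{x,f_i}\bigr\|^2$, and the lower frame bound applied to vectors supported near $\mathbb T$, or in the continuous part, contradicts this.

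Multiplicity $\le|I|$ is the easier part. If $\dim P_jH>|I|$, there is $0\neq h\in P_jH$ orthogonal to all $P_jf_i$, and then $\langle h,A^nf_i\rangle=0$ for all $n,i$.

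Testing the frame inequality on $h\in P_jH$ gives
\[
\sum_{n,i}|\langle h,A^nf_i\rangle|^2=(1-|\lambda_j|^2)^{-1}\sum_i|\langle h,P_jf_i\rangle|^2,
\]
which is (iii).

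For (ii), the upper bound says that $\{\varepsilon_j^{-1}\langle\cdot,P_jf_i\rangle k_{\lambda_j}\}$ is Bessel, and with (iii) this yields that $\sum_j\varepsilon_j^2\delta_{\lambda_j}$ is a Carleson measure. The lower bound gives a uniform separation-type estimate, since at most $|I|$ points can cluster. Together these give a finite union of at most $|I|$ Carleson sequences.

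Finally, with boundedness established, surjectivity of $C_E^*T_{\Lambda,I}$ is equivalent to (iv) by elementary linear algebra, because $C_E^*$ is onto.

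\emph{Main obstacle.} I expect the hardest part to be the necessity of (i) and (ii): excluding continuous spectrum, and extracting the precise ``at most $|I|$ Carleson sequences'' bound from the frame inequalities. For this I would follow the single-generator arguments of \cite{AAldroubiDynamicalsampling,NormalOperators}, applied to suitable combinations of the generators.
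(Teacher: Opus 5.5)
The paper gives no proof of this lemma; it quotes it from \cite[Theorem~2.3]{CCabrelliUMolter}, so your sketch has to stand on its own. Its architecture is right. On polynomials the synthesis operator of $\mathcal A$ is exactly $C_E^*T_{\Lambda,I}$; no twist is needed, since $\sum_n c_{n,i}A^nf_i=\sum_j\varphi_i(\lambda_j)P_jf_i$ for $\varphi_i=\sum_n c_{n,i}z^n$. Condition (iii) makes $E$ a frame, so $C_E^*$ is onto, and (ii) makes $T_{\Lambda,I}$ bounded. Sufficiency, the multiplicity bound, necessity of (iii), and the passage between surjectivity and (iv) are all correct.

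The Carleson-measure half of (ii) is also correct, but it needs an argument you did not give. For each $j$, choose $i(j)\in I$ and a unit vector $h_j\in P_jH$ with $|\langle h_j,P_jf_{i(j)}\rangle|^2\ge\alpha(1-|\lambda_j|^2)/|I|$. Then, separately for each $i$, test the Bessel bound on $x=\sum_{i(j)=i}c_j\varepsilon_j\langle h_j,P_jf_i\rangle^{-1}h_j$. This bounds $c\mapsto\sum_jc_j\varepsilon_jk_{\lambda_j}$, whose adjoint is $T_\Lambda$.

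\textbf{The genuine gap is the necessity of (i), together with the bound ``at most $|I|$'' in (ii).} What you write there is not an argument, and the mechanism it suggests does not work. Vectors spectrally supported near $\mathbb T$ are harmless, since the eigenvalues must accumulate there. A single vector in the continuous part produces no conflict between the Bessel bound and the lower frame bound. Two ideas are missing.

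First, let $\Pi$ denote the spectral measure of $A$ and $a$ the lower frame bound. Mass on $\{|z|\ge1\}$ is excluded by shifting, not by testing one vector: $a\|A^{*k}x\|^2\le\sum_i\sum_{n\ge k}|\langle x,A^nf_i\rangle|^2\to0$. Hence $A^*$ is strongly stable and $\Pi(\{|z|\ge1\})=0$.

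Second, inside $\mathbb D$ you need a dimension count. Fix $z_0\in\mathbb D$ and a small disk $\Delta\ni z_0$. On $\Pi(\Delta)H$ the analysis operator is $x\mapsto(\int_\Delta k_z\,d\langle \Pi(z)x,f_i\rangle)_{i\in I}$. In norm it lies within $\sup_{z\in\Delta}\|k_z-k_{z_0}\|\,(\sum_i\|f_i\|^2)^{1/2}$ of $x\mapsto(\langle x,f_i\rangle k_{z_0})_{i\in I}$, which has rank at most $|I|$. The lower frame bound therefore forces $\dim\Pi(\Delta)H\le|I|$ once $\Delta$ is small. This rules out continuous spectrum and accumulation of eigenvalues inside $\mathbb D$, and gives (i).

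To get the count in (ii), run the same estimate with the normalized kernels $\hat k_\lambda=(1-|\lambda|^2)^{1/2}k_\lambda$. Use $\min_{|\tau|=1}\|\hat k_\lambda-\tau\hat k_\mu\|\le\sqrt2\,\rho(\lambda,\mu)$ and $\|P_jf_i\|\le\sqrt\beta\,\varepsilon_j$, which follows from (iii). This gives a radius $r>0$, depending only on the frame bounds and $|I|$, such that no pseudo-hyperbolic disk of radius $r$ contains more than $|I|$ eigenvalues. A greedy colouring then splits $\Lambda$ into at most $|I|$ $r$-separated subsequences. Each of these is a Carleson sequence by the Carleson-measure property and Carleson's theorem.

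Appealing to single-generator arguments ``applied to suitable combinations of the generators'' cannot replace this, because individual orbits need not be frames. The dimension count handles all generators at once.
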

The trace-space identity obtained in the preceding section leads to the
following invariance of the ranges of weighted evaluation operators
under power perturbations.
\begin{thm}
\label{thm:range-power-stability-zero}
Let
\(
\Lambda=\{\lambda_j\}_{j\in\mathbb N}\subset[0,1)
\)
be a finite union of Carleson sequences, and let \(p>0\). Then
\(
\operatorname{Ran}(T_\Lambda)
=
\operatorname{Ran}(T_{\Lambda^p}).
\)
Consequently, for every finite index set \(I\),
\(
\operatorname{Ran}(T_{\Lambda,I})
=
\operatorname{Ran}(T_{\Lambda^p,I}).
\)
\end{thm}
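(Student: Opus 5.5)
Since \(\operatorname{Ran}(T_\Lambda)=\{\{\varepsilon_j\varphi(\lambda_j)\}_j:\varphi\in H^2(\mathbb D)\}\cap\ell^2(\mathbb N)\), we have
\(\operatorname{Ran}(T_\Lambda)=\{\varepsilon\cdot c: c\in H^2(\Lambda)\}\cap\ell^2\).
Theorem~\ref{thm:h2-trace-power-stability} gives \(H^2(\Lambda)=H^2(\Lambda^p)\). It therefore suffices to show that \(H^2(\Lambda)\) is invariant under coordinatewise multiplication by the weight ratio
\[
w_j:=\frac{\varepsilon_j^{(p)}}{\varepsilon_j}=\left(\frac{1-\lambda_j^{2p}}{1-\lambda_j^2}\right)^{1/2}
\]
and by its reciprocal \(1/w_j\). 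Once this is known, the argument is formal. If \(y=\varepsilon c\in\operatorname{Ran}(T_\Lambda)\) with \(c\in H^2(\Lambda)\), write \(y=\varepsilon^{(p)}(c/w)\). Then \(c/w\in H^2(\Lambda)=H^2(\Lambda^p)\), and \(y\in\ell^2\) is unchanged, so \(y\in\operatorname{Ran}(T_{\Lambda^p})\). The reverse inclusion follows symmetrically using \(w\). The statement for \(T_{\Lambda,I}\) follows immediately, because ranges of finite direct sums are direct sums of ranges.

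The multiplier invariance will come from the claim made in the introduction: \(w\) and \(1/w\) belong to \(H^\infty(\Lambda)\). If \(w=h|_\Lambda\) with \(h\in H^\infty(\mathbb D)\) and \(c=f|_\Lambda\) with \(f\in H^2\), then \(wc=(hf)|_\Lambda\) and \(hf\in H^2\). To produce \(h\), I would use the trace theorem (Lemma~\ref{zz}) with \(X=H^\infty(\mathbb D)\). This is admissible because \(H^\infty\) is \(\mathcal C\)-stable (Hartmann) and \(H^\infty\cdot H^\infty\subset H^\infty\). By that lemma, it suffices to check that the pseudo-hyperbolic divided differences of \(w\) on each cluster \(\sigma_n\) are uniformly bounded in \(n\). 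Indeed, \(\ell_0(H^\infty)=\ell^\infty\) for the Carleson selector \(\Lambda_0\), since Carleson sequences are \(H^\infty\)-interpolating.

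For the uniform bound I would reuse the machinery from the proof of Theorem~\ref{thm:h2-trace-power-stability}. Write \(\lambda=\tanh u\) with \(u\ge u_*>0\); this covers the case \(0\notin\Lambda\), and the point \(0\) is handled at the end. Then \(w=\omega(u)\) with
\[
\omega(u)=\left(\frac{1-(\tanh u)^{2p}}{\operatorname{sech}^2u}\right)^{1/2}.
\]
By Proposition~\ref{prop:pseudo-hyperbolic-ordinary-comparison}, with cluster diameter \(L=\operatorname{arctanh}\eta\) and \(M=N\), it is enough to bound the ordinary divided differences \(D^{k-1}\omega[u_{n,1},\dots,u_{n,k}]\). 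By Lemma~\ref{lem:divided-difference-estimate}, this reduces to showing that \(\omega\) and \(1/\omega\) have bounded derivatives up to order \(N-1\) on \([u_*,\infty)\). This is the main technical step. I would substitute \(x=e^{-2u}\), which gives
\[
\tanh u=\frac{1-x}{1+x},\qquad \operatorname{sech}^2u=\frac{4x}{(1+x)^2}.
\]
Then \(\omega^2=(1+x)^2\bigl((1+x)^{2p}-(1-x)^{2p}\bigr)/\bigl(4x(1+x)^{2p}\bigr)\). The quotient \(\bigl((1+x)^{2p}-(1-x)^{2p}\bigr)/x\) extends to a smooth function on \([0,q_*]\) with value \(4p\) at \(x=0\), and it is positive on that interval. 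Hence \(\omega^2\) is a smooth positive function of \(x\in[0,q_*]\). So are \(\omega\) and \(1/\omega\), and derivatives in \(u\) stay bounded by the chain rule, because \(d^mx/du^m\) is bounded for \(u\ge u_*\). The same argument applies to \(1/\omega\).

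Finally, if \(0\in\Lambda\), I would split \(\Lambda=\{0\}\cup\Gamma\) as at the end of Theorem~\ref{thm:h2-trace-power-stability}. At the point \(0\) the weight ratio is \(1\), and the decomposition \(H^2(\Lambda)=\mathbb C\oplus H^2(\Gamma)\) reduces the problem to \(\Gamma\). Alternatively, I would note directly that \(H^\infty(\Lambda)=\mathbb C\oplus H^\infty(\Gamma)\) by the same Blaschke-product correction.
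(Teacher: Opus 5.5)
Your proposal is correct and follows the paper's proof essentially step for step. You show that the weight ratio and its reciprocal lie in $H^\infty(\Lambda)$ via Hartmann's trace theorem with $X=H^\infty(\mathbb D)$, using bounded derivatives after the substitution $x=e^{-2u}$, Lemma~\ref{lem:divided-difference-estimate}, Proposition~\ref{prop:pseudo-hyperbolic-ordinary-comparison}, and $\ell_0(H^\infty)=\ell^\infty$; you handle $0\in\Lambda$ by a Blaschke correction and then combine with $H^2(\Lambda)=H^2(\Lambda^p)$, differing from the paper only cosmetically in computing $w^2$ directly in $x$ rather than via $\cosh\Phi_p(u)/\cosh u$.
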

\begin{proof}
For \(j\in\mathbb N\), set
\(
d_j:=\sqrt{1-\lambda_j^2},
\
d_{p,j}:=\sqrt{1-\lambda_j^{2p}},
\)
and define
\(
\omega_j
:=
\frac{d_j}{d_{p,j}}
=
\sqrt{
\frac{1-\lambda_j^2}
     {1-\lambda_j^{2p}}
}.
\)
We first show that
\begin{equation}
\label{eq:weight-ratio-hinfty-traces}
\{\omega_j\}_{j\in\mathbb N},
\quad
\{\omega_j^{-1}\}_{j\in\mathbb N}
\in H^\infty(\Lambda).
\end{equation}

Assume first that \(0\notin\Lambda\). Since \(\Lambda\) is a finite
union of Carleson sequences contained in \((0,1)\), it has no
accumulation point in \(\mathbb D\). Consequently,
\(
\inf_{\lambda\in\Lambda}\lambda>0.
\)
Let
\(
\Lambda
=
\mathop{\dot\bigcup}_{n\in\mathbb N^+}\sigma_n,
\
\sigma_n
=
\{\lambda_{n,1},\ldots,\lambda_{n,m_n}\}\) for \(m_n\leq N,\)

be a decomposition satisfying properties \textup{(i)--(iv)} of
Lemma~\ref{cc}. Put
\(
u_{n,k}:=\operatorname{arctanh}\lambda_{n,k}
\)
and define
\(
\Phi_p(u)
:=
\operatorname{arctanh}\bigl((\tanh u)^p\bigr),
\
m_p(u)
:=
\frac{\cosh\Phi_p(u)}{\cosh u}.
\)
Then
\(
\omega_{n,k}
=
m_p(u_{n,k}).
\)

Since \(\Lambda\) is bounded away from \(0\), there exists \(u_*>0\)
such that \(u_{n,k}\geq u_*\). Writing \(x=e^{-2u}\), we have
\(
e^{-2\Phi_p(u)}
=
R_p(x),
\)
where
\(
R_p(x)
:=
\frac{(1+x)^p-(1-x)^p}
     {(1+x)^p+(1-x)^p}.
\)
Moreover,
\(
r_p(x):=\frac{R_p(x)}{x}
\)
extends to a strictly positive smooth function on
\([0,e^{-2u_*}]\), with \(r_p(0)=p\), and
\(
m_p(u)
=
\frac{1+xr_p(x)}{1+x}\,r_p(x)^{-1/2}.
\)
Define
\[
\widehat m_p(x)
:=
\frac{1+xr_p(x)}{1+x}\,r_p(x)^{-1/2},
\qquad
0\leq x\leq e^{-2u_*}.
\]
Then
\(
m_p(u)=\widehat m_p(e^{-2u}).
\)
Since \(r_p\) is strictly positive and smooth on the compact interval
\([0,e^{-2u_*}]\), both \(\widehat m_p\) and
\(1/\widehat m_p\) are smooth there and have bounded derivatives of
every fixed order. Repeated application of the chain rule therefore
gives
\(
\sup_{u\geq u_*}
(
|m_p^{(r)}(u)|
+
|
(\frac{1}{m_p})^{(r)}(u)
|
)
<\infty,
\
0\leq r\leq N-1.
\)

The hyperbolic diameters of the clusters \(\sigma_n\) are uniformly
bounded. Hence Lemma~\ref{lem:divided-difference-estimate} and
Proposition~\ref{prop:pseudo-hyperbolic-ordinary-comparison} imply
that
\[
\sup_{n\in\mathbb N^+}
\max_{1\leq k\leq m_n}
\left|
\widetilde D^{\,k-1}\omega
[\lambda_{n,1},\ldots,\lambda_{n,k}]
\right|
<\infty
\]
and
\[
\sup_{n\in\mathbb N^+}
\max_{1\leq k\leq m_n}
\left|
\widetilde D^{\,k-1}\omega^{-1}
[\lambda_{n,1},\ldots,\lambda_{n,k}]
\right|
<\infty.
\]
Set
\(
\Lambda_0:=\{\lambda_{n,1}:n\in\mathbb N^+\}
\)
and
\(
\sigma:=\{\sigma_n\}_{n\in\mathbb N^+}.
\)
By property \textup{(iii)} of Lemma~\ref{cc}, the sequence
\(\Lambda_0\) belongs to \(\mathcal C\). Hence Carleson's interpolation
theorem~\cite{Carleson1958} gives
\(
\ell_0\bigl(H^\infty\bigr)
=
H^\infty(\Lambda_0)
=
\ell^\infty(\mathbb N^+).
\)
The two preceding uniform estimates therefore imply that
\(
\omega,\ \omega^{-1}
\in
\ell_{\Lambda,N,\sigma}\bigl(H^\infty(\mathbb D)\bigr).
\)
Since \(H^\infty(\mathbb D)\) is \(\mathcal C\)-stable and
\(
H^\infty(\mathbb D)H^\infty(\mathbb D)
\subset H^\infty(\mathbb D),
\)
Lemma~\ref{zz}, applied with \(X=H^\infty(\mathbb D)\), yields
\(
\omega,\ \omega^{-1}
\in H^\infty(\mathbb D)|_\Lambda.
\)
This proves \eqref{eq:weight-ratio-hinfty-traces} when
\(0\notin\Lambda\).

Suppose now that \(0\in\Lambda\), and set
\(
\Gamma:=\Lambda\setminus\{0\}.
\)
If \(\Gamma=\varnothing\), we may simply take \(h=k=1\).
Otherwise, applying the preceding argument to \(\Gamma\), we obtain
functions
\(
h_\Gamma,k_\Gamma\in H^\infty(\mathbb D)
\)
such that
\(
h_\Gamma(\lambda_j)=\omega_j,
\
k_\Gamma(\lambda_j)=\omega_j^{-1},
\
\lambda_j\in\Gamma.
\)
Let \(B_\Gamma\) be the Blaschke product whose zero set is \(\Gamma\).
Since \(0\notin\Gamma\), we have \(B_\Gamma(0)\neq0\). Define
\(
h(z)
:=
h_\Gamma(z)
+
\frac{1-h_\Gamma(0)}{B_\Gamma(0)}B_\Gamma(z)
\)
and
\(
k(z)
:=
k_\Gamma(z)
+
\frac{1-k_\Gamma(0)}{B_\Gamma(0)}B_\Gamma(z).
\)
Then \(h,k\in H^\infty(\mathbb D)\), and
\(
h(\lambda_j)=\omega_j,
\
k(\lambda_j)=\omega_j^{-1},
\
\lambda_j\in\Gamma.
\)
Moreover,
\(
h(0)=k(0)=1.
\)
Since
\(
\omega(0)=\omega(0)^{-1}=1,
\)
the functions \(h\) and \(k\) interpolate the required data on the
whole sequence \(\Lambda\). Thus, in all cases, there exist
\(h,k\in H^\infty(\mathbb D)\) such that
\(
h(\lambda_j)=\omega_j,
\
k(\lambda_j)=\omega_j^{-1},
\
j\in\mathbb N.
\)

We now prove the range equality. Let
\(
c=\{c_j\}_{j\in\mathbb N}
\in\operatorname{Ran}(T_\Lambda).
\)
Then there exists \(f\in H^2(\mathbb D)\) such that
\(
c_j=d_jf(\lambda_j),
\
j\in\mathbb N.
\)
Since \(h\in H^\infty(\mathbb D)\), the sequence
\(
b_j
:=
\omega_jf(\lambda_j)
=
(hf)(\lambda_j)
\)
belongs to \(H^2(\Lambda)\). By
Theorem~\ref{thm:h2-trace-power-stability},
\(
H^2(\Lambda)=H^2(\Lambda^p).
\)
Hence there exists \(g\in H^2(\mathbb D)\) such that
\(
g(\lambda_j^p)=b_j,
\
j\in\mathbb N.
\)
Therefore,
\(
d_{p,j}g(\lambda_j^p)
=
d_{p,j}\omega_jf(\lambda_j)
=
d_jf(\lambda_j)
=
c_j.
\)
Thus
\(
c\in\operatorname{Ran}(T_{\Lambda^p}),
\)
and consequently
\(
\operatorname{Ran}(T_\Lambda)
\subset
\operatorname{Ran}(T_{\Lambda^p}).
\)

Conversely, let
\(
c\in\operatorname{Ran}(T_{\Lambda^p}).
\)
Then there exists \(g\in H^2(\mathbb D)\) such that
\(
c_j=d_{p,j}g(\lambda_j^p)\) for \(j\in\mathbb N.\)
By Theorem~\ref{thm:h2-trace-power-stability}, the sequence
\(
\{g(\lambda_j^p)\}_{j\in\mathbb N}
\)
belongs to \(H^2(\Lambda)\). Since \(k\in H^\infty(\mathbb D)\),
the sequence
\(
\left\{
\omega_j^{-1}g(\lambda_j^p)
\right\}_{j\in\mathbb N}
\)
also belongs to \(H^2(\Lambda)\). Hence there exists
\(f\in H^2(\mathbb D)\) such that
\(
f(\lambda_j)
=
\omega_j^{-1}g(\lambda_j^p)\) for \(j\in\mathbb N.\)
It follows that
\(
d_jf(\lambda_j)
=
d_j\omega_j^{-1}g(\lambda_j^p)
=
d_{p,j}g(\lambda_j^p)
=
c_j.
\)
Therefore,
\(
c\in\operatorname{Ran}(T_\Lambda).
\)
We conclude that
\(
\operatorname{Ran}(T_\Lambda)
=
\operatorname{Ran}(T_{\Lambda^p}).
\)

Finally, since \(I\) is finite,
\(
\operatorname{Ran}(T_{\Lambda,I})
=
\bigoplus_{i\in I}\operatorname{Ran}(T_\Lambda)
\)
and
\(
\operatorname{Ran}(T_{\Lambda^p,I})
=
\bigoplus_{i\in I}\operatorname{Ran}(T_{\Lambda^p}).
\)
Hence
\(
\operatorname{Ran}(T_{\Lambda,I})
=
\operatorname{Ran}(T_{\Lambda^p,I}),
\)
which completes the proof.
\end{proof}

The preceding range identity yields the main theorem.

\begin{thm}
\label{thm:power-stability-finitely-generated-frames}
Let \(A\in\mathcal B(H)\) be a positive operator,
let \(I\) be a finite index set, and let \(\{f_i\}_{i\in I}\subset H\).
Then, for every \(p>0\), the system
\(
\{A^nf_i\}_{n\in\mathbb N,\,i\in I}
\)
is a frame for \(H\) if and only if
\(
\{A^{pn}f_i\}_{n\in\mathbb N,\,i\in I}
\)
is a frame for \(H\).
\end{thm}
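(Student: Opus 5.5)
By symmetry (replace \(A\) by \(A^p\) and \(p\) by \(1/p\), using \((A^p)^{1/p}=A\) for positive operators via the functional calculus), it suffices to prove one direction. We will nevertheless verify the four conditions of Lemma~\ref{ii} in both directions at once, since they are symmetric in \(A\) and \(A^p\). First note that \(A^p\) is positive, hence normal, so Lemma~\ref{ii} applies to both \(A\) and \(A^p\). Suppose \(\{A^nf_i\}\) is a frame. Then condition (i) gives \(A=\sum_j\lambda_jP_j\) with distinct eigenvalues \(\lambda_j\), and positivity forces \(\lambda_j\ge0\). Condition (ii) gives \(\Lambda\subset\mathbb D\), so \(\Lambda\subset[0,1)\). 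By the spectral functional calculus, \(A^p=\sum_j\lambda_j^pP_j\) in the strong operator topology. Since \(t\mapsto t^p\) is injective on \([0,1)\), the eigenvalues \(\lambda_j^p\) are distinct, \(\ker(A^p-\lambda_j^pI_H)=P_jH\), and the multiplicity is unchanged. This establishes (i) for \(A^p\).

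For (ii), write \(\Lambda=\bigcup_{r\le|I|}\Lambda_r\) with \(\Lambda_r\in\mathcal C\). The point \(0\) lies in at most one \(\Lambda_r\); removing it from that \(\Lambda_r\) leaves a sequence in \((0,1)\) that is still Carleson, and adding \(0\) back to \(\Lambda_r^p\) preserves the Carleson property, since adding a single point to a Carleson sequence not containing it keeps it Carleson. Proposition~\ref{bb} then shows that \(\Lambda^p\) is a union of at most \(|I|\) Carleson sequences. For (iii), the projections \(P_jf_i\) are unchanged, so the two-sided estimate for \(A^p\) follows from that for \(A\) once we know \(1-\lambda_j^{2p}\asymp1-\lambda_j^2\) uniformly in \(j\). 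This comparison follows from \(\min\{1,p\}\le(1-t^p)/(1-t)\le\max\{1,p\}\) applied with \(t=\lambda_j^2\); it also holds trivially when \(\lambda_j=0\).

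Condition (iv) is the main point, and it has two parts. The range part is exactly Theorem~\ref{thm:range-power-stability-zero}: \(\operatorname{Ran}(T_{\Lambda,I})=\operatorname{Ran}(T_{\Lambda^p,I})\). The obstacle is that \(\ker(C_E^*)\) also changes, because \(E_p=\{(1-\lambda_j^{2p})^{-1/2}P_jf_i\}\) differs from \(E\) by the weights \(\omega_j=d_j/d_{p,j}\). Note that \(E_p=\{\omega_je_{j,i}\}\). Writing \(W\) for the bounded, boundedly invertible diagonal operator on \(\ell^2(I\times\mathbb N)\) that multiplies the \((i,j)\)-coordinate by \(\omega_j\), we get \(C_E^*(c)=\sum c_{i,j}e_{j,i}\) and \(C_{E_p}^*=C_E^*W\), so \(\ker(C_{E_p}^*)=W^{-1}\ker(C_E^*)\). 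Both \(E\) and \(E_p\) are Bessel by (iii), so these operators are well defined. Applying \(W^{-1}\) to the identity \(\operatorname{Ran}(T_{\Lambda,I})+\ker(C_E^*)=\ell^2\) gives
\[
W^{-1}\operatorname{Ran}(T_{\Lambda,I})+\ker(C_{E_p}^*)=\ell^2(I\times\mathbb N).
\]

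It remains to show \(W^{-1}\operatorname{Ran}(T_{\Lambda,I})=\operatorname{Ran}(T_{\Lambda^p,I})\). Working coordinatewise in \(i\), this follows from \(\omega^{-1}\operatorname{Ran}(T_\Lambda)=\operatorname{Ran}(T_{\Lambda^p})\). Indeed, \(\omega_j^{-1}d_jf(\lambda_j)=d_{p,j}f(\lambda_j)\), so \(\omega^{-1}\operatorname{Ran}(T_\Lambda)=\{d_{p,j}a_j:a\in H^2(\Lambda)\}\), and this equals \(\operatorname{Ran}(T_{\Lambda^p})=\{d_{p,j}b_j:b\in H^2(\Lambda^p)\}\) by Theorem~\ref{thm:h2-trace-power-stability}. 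Condition (iv) holds, and Lemma~\ref{ii} shows \(\{(A^p)^nf_i\}\) is a frame. The reverse direction follows by the symmetry noted at the start.
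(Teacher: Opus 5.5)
Your proof is correct. For the symmetry reduction and conditions (i)--(iii) it matches the paper, but for condition (iv) you take a genuinely different route.

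\textbf{Paper's route.} The paper proves the unweighted identity \(\operatorname{Ran}(T_{\Lambda,I})=\operatorname{Ran}(T_{\Lambda^p,I})\) (Theorem~\ref{thm:range-power-stability-zero}). That step requires showing \(\omega,\omega^{-1}\in H^\infty(\Lambda)\), via the smoothness of \(m_p\), Hartmann's trace theorem for \(H^\infty\), and Carleson's interpolation theorem. It then checks separately that \(\ker C_E^*=\ker C_{E_p}^*\).

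\textbf{Your route.} You transport the whole decomposition \(\operatorname{Ran}(T_{\Lambda,I})+\ker C_E^*=\ell^2(I\times\mathbb N)\) by the diagonal isomorphism \(W^{-1}\). This needs only the \(H^2\)-trace identity of Theorem~\ref{thm:h2-trace-power-stability} and the two-sided bound on \(\omega_j\). It therefore bypasses the \(H^\infty\) interpolation step entirely. In fact, your citation of Theorem~\ref{thm:range-power-stability-zero} is never used in your argument. What the paper's route buys in exchange is a standalone range identity for the weighted evaluation operators, independent of the frame application.

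\textbf{The perceived obstacle.} The ``obstacle'' you identify does not actually exist. \(W\) acts as the scalar \(\omega_j\) on the \(j\)-th block, and the subspaces \(P_jH\) are mutually orthogonal. Hence membership in \(\ker C_E^*\) is decided blockwise, by the condition \(\sum_i a_{i,j}P_jf_i=0\) for each \(j\). So \(W^{-1}\ker C_E^*=\ker C_E^*=\ker C_{E_p}^*\). Your formula \(\ker C_{E_p}^*=W^{-1}\ker C_E^*\) is correct; it just describes an unchanged space, which is exactly what the paper uses.

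\textbf{Minor points.}
\begin{enumerate}
\item The point \(0\) need not lie in only one \(\Lambda_r\). You should first delete it from all but one of them, which preserves the Carleson property.
\item The bounded invertibility of \(W\) deserves a line. It follows from the same inequality \(\min\{1,p\}\le(1-t^p)/(1-t)\le\max\{1,p\}\) that you use in (iii).
\end{enumerate}
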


\begin{proof}
Suppose first that
\(
\{A^nf_i\}_{n\in\mathbb N,\,i\in I}
\)
is a frame for \(H\). By Lemma~\ref{ii}\textup{(i)}, the operator
\(A\) necessarily admits a diagonal spectral representation
\(
A=\sum_{j\in\mathbb N}\lambda_jP_j,
\)
where the series converges in the strong operator topology, the
numbers \(\lambda_j\) are pairwise distinct,
\(
\sum_{j\in\mathbb N}P_j=I_H,
\
P_jH=\ker(A-\lambda_jI_H),
\)
and
\(
\operatorname{mult}(A)
=
\sup_{j\in\mathbb N}\dim P_jH
\leq |I|.
\)
Since \(A\) is positive and \(\lambda_j\in\mathbb D\), we have
\(
0\leq\lambda_j<1,
\
j\in\mathbb N.
\)
Thus
\(
\Lambda:=\{\lambda_j:j\in\mathbb N\}\subset[0,1).
\)

By the spectral theorem,
\(
A^p=\sum_{j\in\mathbb N}\lambda_j^pP_j.
\)
Since the map \(t\mapsto t^p\) is strictly increasing on
\([0,1)\), the numbers \(\lambda_j^p\) are pairwise distinct.
Hence \(A^p\) has the same spectral projections as \(A\), and
\(
\operatorname{mult}(A^p)
=
\operatorname{mult}(A)
\leq |I|.
\)
Therefore condition \textup{(i)} of Lemma~\ref{ii} holds for
\(A^p\).

By Lemma~\ref{ii}\textup{(ii)}, the sequence \(\Lambda\) is the
union of at most \(|I|\) Carleson sequences. Write
\(
\Lambda=\bigcup_{r=1}^{q}\Lambda_r,
\
q\leq |I|,
\
\Lambda_r\in\mathcal C.
\)
For each \(1\leq r\leq q\), set
\(
\Gamma_r:=\Lambda_r\setminus\{0\}.
\)
If \(\Gamma_r\neq\varnothing\), then
\(\Gamma_r\subset(0,1)\) and \(\Gamma_r\in\mathcal C\). Hence
Proposition~\ref{bb} gives
\(
\Gamma_r^p\in\mathcal C.
\)
Since adding or removing finitely many points preserves the
Carleson condition, it follows that
\(
\Lambda_r^p
:=
\{\lambda^p:\lambda\in\Lambda_r\}
\in\mathcal C.
\)
The same conclusion is immediate when
\(\Gamma_r=\varnothing\). Consequently,
\(
\Lambda^p
:=
\{\lambda_j^p:j\in\mathbb N\}
=
\bigcup_{r=1}^{q}\Lambda_r^p
\)
is the union of at most \(|I|\) Carleson sequences. Thus condition
\textup{(ii)} of Lemma~\ref{ii} holds for \(A^p\).

By Lemma~\ref{ii}\textup{(iii)}, there exist constants
\(\alpha,\beta>0\) such that, for every \(j\in\mathbb N\) and every
\(h\in P_jH\),
\(
\alpha(1-\lambda_j^2)\|h\|^2
\leq
\sum_{i\in I}|\langle h,P_jf_i\rangle|^2
\leq
\beta(1-\lambda_j^2)\|h\|^2.
\)
For every \(0\leq t<1\), one has
\(
\min\{1,p\}
\leq
\frac{1-t^p}{1-t}
\leq
\max\{1,p\}.
\)
Applying this inequality to \(t=\lambda_j^2\), we obtain
\[
\frac{1}{\max\{1,p\}}
(1-\lambda_j^{2p})
\leq
1-\lambda_j^2
\leq
\frac{1}{\min\{1,p\}}
(1-\lambda_j^{2p}).
\]
Therefore,
\[
\frac{\alpha}{\max\{1,p\}}
(1-\lambda_j^{2p})\|h\|^2
\leq
\sum_{i\in I}|\langle h,P_jf_i\rangle|^2
\leq
\frac{\beta}{\min\{1,p\}}
(1-\lambda_j^{2p})\|h\|^2.
\]
Hence condition \textup{(iii)} of Lemma~\ref{ii} holds for \(A^p\).

It remains to verify condition \textup{(iv)}. Define
\(
E
:=
\left\{
(1-\lambda_j^2)^{-1/2}P_jf_i
\right\}_{j\in\mathbb N,\,i\in I}
\)
and
\(
E_p
:=
\left\{
(1-\lambda_j^{2p})^{-1/2}P_jf_i
\right\}_{j\in\mathbb N,\,i\in I}.
\)
The upper estimates in condition \textup{(iii)} show that both
\(E\) and \(E_p\) are Bessel sequences. We claim that
\(
\ker C_E^*=\ker C_{E_p}^*.
\)
Indeed, let
\(
a=\{a_{i,j}\}_{(i,j)\in I\times\mathbb N}
\in\ell^2(I\times\mathbb N).
\)
Since the subspaces \(P_jH\) are mutually orthogonal, for every
\(j\in\mathbb N\),
\(
P_jC_E^*a
=
\frac{1}{\sqrt{1-\lambda_j^2}}
\sum_{i\in I}a_{i,j}P_jf_i.
\)
Therefore,
\[
a\in\ker C_E^*
\quad\Longleftrightarrow\quad
\sum_{i\in I}a_{i,j}P_jf_i=0
\quad\text{for every }j\in\mathbb N.
\]
Similarly,
\(
P_jC_{E_p}^*a
=
\frac{1}{\sqrt{1-\lambda_j^{2p}}}
\sum_{i\in I}a_{i,j}P_jf_i,
\)
and hence
\[
a\in\ker C_{E_p}^*
\quad\Longleftrightarrow\quad
\sum_{i\in I}a_{i,j}P_jf_i=0
\quad\text{for every }j\in\mathbb N.
\]
Thus
\(
\ker C_E^*=\ker C_{E_p}^*.
\)
By Theorem~\ref{thm:range-power-stability-zero},
\(
\operatorname{Ran}(T_{\Lambda,I})
=
\operatorname{Ran}(T_{\Lambda^p,I}).
\)
Moreover, Lemma~\ref{ii}\textup{(iv)} gives
\(
\operatorname{Ran}(T_{\Lambda,I})
+
\ker C_E^*
=
\ell^2(I\times\mathbb N).
\)
Consequently,
\[
\operatorname{Ran}(T_{\Lambda^p,I})
+
\ker C_{E_p}^*=
\operatorname{Ran}(T_{\Lambda,I})
+
\ker C_E^*
=
\ell^2(I\times\mathbb N).
\]
Thus condition \textup{(iv)} of Lemma~\ref{ii} holds for \(A^p\).
All four conditions in Lemma~\ref{ii} are therefore satisfied by
\(A^p\) and \(\{f_i\}_{i\in I}\). Hence
\(
\{(A^p)^nf_i\}_{n\in\mathbb N,\,i\in I}
\)
is a frame for \(H\). Since
\(
(A^p)^n=A^{pn}\) for \(n\in\mathbb N,\)
we conclude that
\(
\{A^{pn}f_i\}_{n\in\mathbb N,\,i\in I}
\)
is a frame for \(H\).

Conversely, suppose that
\(
\{A^{pn}f_i\}_{n\in\mathbb N,\,i\in I}
\)
is a frame for \(H\). Set
\(
B:=A^p.
\)
Then \(B\) is positive and
\(
\{B^nf_i\}_{n\in\mathbb N,\,i\in I}
=
\{A^{pn}f_i\}_{n\in\mathbb N,\,i\in I}
\)
is a frame for \(H\). Applying the implication proved above to
\(B\) with exponent \(1/p\), we obtain that
\(
\{B^{n/p}f_i\}_{n\in\mathbb N,\,i\in I}
\)
is a frame for \(H\). Since
\(
B^{1/p}=(A^p)^{1/p}=A,
\)
we have
\(
B^{n/p}=(B^{1/p})^n=A^n.
\)
Therefore,
\(
\{A^nf_i\}_{n\in\mathbb N,\,i\in I}
\)
is a frame for \(H\), completing the proof.
\end{proof}

As an immediate consequence of Theorem~\ref{thm:power-stability-finitely-generated-frames}, we obtain a
multi-orbit analogue of the subsampling and redundancy phenomena
observed in \cite{OChristensenMHasannasab,KrishtalMiller}.

\begin{cor}
\label{cor:multi-orbit-infinite-excess}
Let \(A\in\mathcal B(H)\) be a positive operator, let \(I\) be a
finite index set, and let \(\{f_i\}_{i\in I}\subset H\). Suppose that
\(
\mathcal A
:=
\{A^nf_i\}_{n\in\mathbb N,\,i\in I}
\)
is a frame for \(H\). Then, for every \(q\in\mathbb N^+\), the
subfamily
\(
\mathcal A_q
:=
\{A^{qn}f_i\}_{n\in\mathbb N,\,i\in I}
\)
is also a frame for \(H\). In particular, \(\mathcal A\) has infinite
excess.

In addition, the following elementary observation holds: if \(A\) is
invertible, then the frame property is preserved after the removal of
any finite collection of elements of \(\mathcal A\).
\end{cor}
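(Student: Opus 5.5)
The subsampling statement is Theorem~\ref{thm:power-stability-finitely-generated-frames} with the integer exponent $p=q$. Since $A$ is positive, $A^q$ is the ordinary $q$-th power, so $(A^q)^n=A^{qn}$. Hence $\mathcal A_q=\{(A^q)^nf_i\}_{n\in\mathbb N,\,i\in I}$, and the theorem gives that $\mathcal A_q$ is a frame for $H$ whenever $\mathcal A$ is. Nothing beyond the main theorem is needed for this part.

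For infinite excess, I would use the standard notion: the excess of $\mathcal A$ is the supremum of the cardinalities of subsets whose removal still leaves a frame (equivalently, a complete system). Take $q=2$. Then $\mathcal A_2\subset\mathcal A$ is a frame, and the removed elements $\{A^{n}f_i\}_{n\ \mathrm{odd},\,i\in I}$ form an infinite family of indices. I need to check that this removed index set really is infinite as a collection of frame elements, i.e.\ that the indexing does not collapse.

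There is a degenerate case to handle. If $H$ is infinite-dimensional and $\mathcal A$ is a frame, then the $f_i$ cannot all be zero. More precisely, for any fixed $i_0$ with $f_{i_0}\neq0$, the removed family contains infinitely many indexed elements $A^{n}f_{i_0}$ with $n$ odd. Since frames are indexed families, the excess counts removed indices. Removing these infinitely many indices leaves the frame $\mathcal A_2$, so the excess is infinite.

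If one prefers to count only nonzero vectors, I would argue as follows. By Lemma~\ref{ii}, $A$ has infinitely many distinct eigenvalues, because $H$ is infinite-dimensional and each eigenspace has dimension at most $|I|$. Also, $P_jf_{i}\neq0$ for some $i$ at every $j$, by condition (iii). So for fixed $i$ with infinitely many $j$ such that $P_jf_i\neq0$, the vectors $A^nf_i=\sum_j\lambda_j^nP_jf_i$ are nonzero for all $n$. Such an $i$ exists by pigeonhole over the finite set $I$.

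For the invertibility remark, suppose $A$ is invertible, and remove a finite set $F\subset\mathcal A$. Choose $m$ larger than every exponent occurring in $F$. Then $\{A^{n}f_i\}_{n\ge m,\,i\in I}=A^m\mathcal A$ lies in $\mathcal A\setminus F$. The image of a frame under a bounded invertible operator is a frame, so $A^m\mathcal A$ is a frame. A Bessel family containing a frame is itself a frame, and $\mathcal A\setminus F$ is Bessel as a subfamily of $\mathcal A$. Hence $\mathcal A\setminus F$ is a frame.

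The main obstacle is only the bookkeeping in the infinite-excess claim, namely confirming that infinitely many genuinely nonzero elements are removed. The pigeonhole argument above settles this.
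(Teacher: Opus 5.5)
Your proposal is correct and follows the paper's proof essentially step for step. The subsampling claim is the main theorem with $p=q$. Infinite excess comes from removing the infinitely many indices outside $q\mathbb N$ (you take $q=2$). In the invertible case, the tail $A^m\mathcal A$ is a frame contained in the Bessel family $\mathcal A\setminus F$. Your extra pigeonhole argument, showing that infinitely many \emph{nonzero} vectors are removed, is a correct but optional refinement that the paper omits, since excess is counted over indices.
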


\begin{proof}
The first assertion follows immediately from Theorem~
\ref{thm:power-stability-finitely-generated-frames} by taking \(p=q\).

For \(q\geq2\), the complement of \(q\mathbb N\) in \(\mathbb N\) is
infinite, while the remaining subfamily \(\mathcal A_q\) is a frame.
Thus an infinite collection of elements can be removed from
\(\mathcal A\) without destroying the frame property, and hence
\(\mathcal A\) has infinite excess.

Finally, assume that \(A\) is invertible, and let
\(J\subset\mathbb N\times I\) be finite. If \(J=\varnothing\), there
is nothing to prove. Otherwise, choose \(K\in\mathbb N\) such that
\(
K>\max\{n:(n,i)\in J\}.
\)
Since \(A^K\) is boundedly invertible, the tail
\(
\{A^{n+K}f_i\}_{n\in\mathbb N,\,i\in I}
=
\{A^K(A^nf_i)\}_{n\in\mathbb N,\,i\in I}
\)
is a frame for \(H\). This tail is contained in
\(
\{A^nf_i:(n,i)\in(\mathbb N\times I)\setminus J\}.
\)
The latter system is Bessel because it is a subfamily of
\(\mathcal A\), and it contains a frame for \(H\). Therefore it is
itself a frame for \(H\).
\end{proof}

The following example shows that the positivity assumption in
Theorem~\ref{thm:power-stability-finitely-generated-frames} cannot be
omitted, even for a self-adjoint contraction with simple spectrum
and a single generator.

\begin{example}
\label{ex:failure-with-negative-eigenvalues}
Let
\(
r_n:=1-2^{-n},
\ n\in\mathbb N^+,
\)
and set
\(
\Lambda
:=
\{r_n:n\in\mathbb N^+\}
\cup
\{-r_n:n\in\mathbb N^+\}.
\)
Let
\(
H:=\ell^2(\mathbb N^+)
\)
with canonical orthonormal basis
\(
\{e_j:j\in\mathbb N^+\}.
\)
Define a self-adjoint contraction \(A\in\mathcal B(H)\) by
\(
Ae_{2n}=r_ne_{2n},
\
Ae_{2n-1}=-r_ne_{2n-1},
\ n\in\mathbb N^+,
\)
and define
\(
f
:=
\sum_{n\in\mathbb N^+}
\sqrt{1-r_n^2}\,
\bigl(e_{2n}+e_{2n-1}\bigr).
\)
Then
\(
\{A^kf\}_{k\in\mathbb N}
\)
is a frame for \(H\), whereas
\(
\{A^{2k}f\}_{k\in\mathbb N}
\)
is not a frame for \(H\).
\end{example}

\begin{proof}
We first show that \(\Lambda\) is a Carleson sequence. Put
\(
q_\ell
:=
\frac{1-2^{-\ell}}{1+2^{-\ell}}\) for \(\ell\in\mathbb N^+.\)
For \(m=n+\ell\), a direct computation gives
\(
\rho(r_n,r_{n+\ell})
=
\frac{1-2^{-\ell}}
{1+2^{-\ell}-2^{-(n+\ell)}}
\geq
q_\ell.
\)
The same estimate holds for
\(\rho(r_n,r_{n-\ell})\) whenever \(n-\ell\geq1\). Since
\(
\sum_{\ell\in\mathbb N^+}(1-q_\ell)<\infty,
\)
the product
\(
Q:=\prod_{\ell\in\mathbb N^+}q_\ell
\)
is strictly positive. Hence
\(
\prod_{\substack{m\in\mathbb N^+\\m\neq n}}
\rho(r_n,r_m)
\geq
Q^2\) for \(n\in\mathbb N^+.\)

For the interaction between the positive and negative branches, we
have
\(
\rho(r_n,-r_m)
=
\frac{r_n+r_m}{1+r_nr_m}
\geq
\frac45
\)
and
\(
1-\rho(r_n,-r_m)
=
\frac{(1-r_n)(1-r_m)}{1+r_nr_m}
\leq
2^{-n-m}.
\)
Therefore,
\[
-\log\rho(r_n,-r_m)
\leq
\frac{1-\rho(r_n,-r_m)}
{\rho(r_n,-r_m)}
\leq
\frac54\,2^{-n-m}.
\]
It follows that
\[
\prod_{m\in\mathbb N^+}
\rho(r_n,-r_m)
\geq
\exp\left(
-\frac54
\sum_{m\in\mathbb N^+}2^{-n-m}
\right)
\geq
e^{-5/8}.
\]
Combining these estimates and using symmetry, we obtain
\(
\inf_{\lambda\in\Lambda}
\prod_{\mu\in\Lambda\setminus\{\lambda\}}
\rho(\lambda,\mu)
\geq
Q^2e^{-5/8}
>0.
\)
Thus
\(
\Lambda\in\mathcal C.
\)
Moreover,
\(
\|f\|^2
=
2\sum_{n\in\mathbb N^+}(1-r_n^2)
\leq
4\sum_{n\in\mathbb N^+}2^{-n}
<\infty,
\)
so \(f\in H\).

We now apply Lemma~\ref{ii} with a single generator.
Define
\(
\lambda_{2n}:=r_n,
\
\lambda_{2n-1}:=-r_n,
\
n\in\mathbb N^+,
\)
and let \(P_j\) denote the orthogonal projection onto
\(\operatorname{span}\{e_j\}\). Then
\(
A
=
\sum_{j\in\mathbb N^+}\lambda_jP_j,
\)
all eigenspaces of \(A\) are one-dimensional, and
\(
\{\lambda_j\}_{j\in\mathbb N^+}
=
\Lambda
\in\mathcal C.
\)
For every \(j\in\mathbb N^+\) and every \(h\in P_jH\), we have
\(
|\langle h,P_jf\rangle|^2
=
(1-|\lambda_j|^2)\|h\|^2.
\)
Thus condition \textup{(iii)} of Lemma~\ref{ii} holds with
\(
\alpha=\beta=1.
\)

Furthermore, the normalized spectral family appearing in
Lemma~\ref{ii} is
\[
E
=
\left\{
(1-|\lambda_j|^2)^{-1/2}P_jf
:
j\in\mathbb N^+
\right\}
=
\{e_j:j\in\mathbb N^+\},
\]
which is an orthonormal basis of \(H\). Hence
\(
\ker C_E^*=\{0\}.
\)
Since \(\Lambda\in\mathcal C\), the Shapiro--Shields interpolation
theorem~\cite{ShapiroShields} gives
\(
\operatorname{Ran}(T_\Lambda)
=
\ell^2(\mathbb N^+).
\)
Therefore condition \textup{(iv)} of Lemma~\ref{ii} also holds. It
follows from Lemma~\ref{ii} that
\(
\{A^kf\}_{k\in\mathbb N}
\)
is a frame for \(H\).

On the other hand, for each \(n\in\mathbb N^+\), the nonzero vector
\(
g_n:=e_{2n}-e_{2n-1}
\)
is orthogonal to every even iterate of \(f\). Indeed, for every
\(k\in\mathbb N\),
\[
\langle g_n,A^{2k}f\rangle=
\sqrt{1-r_n^2}\,r_n^{2k}
-
\sqrt{1-r_n^2}\,r_n^{2k}=
0.
\]
Thus
\(
\{A^{2k}f\}_{k\in\mathbb N}
\)
is not complete in \(H\), and hence it is not a frame. This shows
that the positivity assumption in
Theorem~\ref{thm:power-stability-finitely-generated-frames} cannot be
omitted.
\end{proof}

\section*{Declarations}

\noindent\textbf{Data availability.}
No data were used in this study.

\medskip

\noindent\textbf{Conflict of interest.}
The author declares that there is no conflict of interest.

\medskip

\noindent\textbf{Funding.}
The author received no funding for this work.

\end{document}